\definecolor{marin}{rgb}   {0.,   0.3,   0.7} 
\definecolor{rouge}{rgb}   {0.8,   0.,   0.} 
\definecolor{sepia}{rgb}   {0.8,   0.5,   0.} 
\theoremstyle{plain} 
\newtheorem{theorem}{Theorem}[section]
\newtheorem{lemma}[theorem]{Lemma}
\newtheorem{corollary}[theorem]{Corollary} 
 \theoremstyle{remark}
\newtheorem{remark}[theorem]{Remark}
\newtheorem{example}[theorem]{Example}
\newtheorem{ass}[theorem]{Assumption}
\newcommand {\aplt} {\ {\raise-.5ex\hbox{$\buildrel<\over\sim$}}\ } 
\newcommand {\gplt} {\ {\raise-.5ex\hbox{$\buildrel>\over\sim$}}\ }
\def\@makefnmark{\hbox{$\m@th^{\@thefnmark}$}}
\begin{document}
\author{Martina Hofmanov\'a}
\address{Institute of mathematics, Technical University Berlin,
Strasse des 17. Juni 136, 10623 Berlin, Germany}
\email{hofmanov@math.tu-berlin.de}

\author{Katharina Schratz}
\address{Fakult\"{a}t f\"{u}r Mathematik, Karlsruhe Institute of Technology,
Englerstr. 2, 76131 Karlsruhe, Germany}
\email{katharina.schratz@kit.edu}

\begin{abstract}
We introduce an exponential-type time-integrator for the KdV equation and prove its first-order  convergence in $H^1$ for initial data in $H^3$.  Furthermore, we outline the generalization of the presented technique to a second-order method. 
\end{abstract}

\keywords{KdV equation -- exponential-type time integrator -- convergence}

\title{An exponential-type integrator for the KdV equation}
\maketitle
We consider the Korteweg-de Vries (KdV) equation
\begin{equation}\label{eq:kdv}
\partial_t u(t,x) + \partial_x^3 u(t,x) = \frac{1}{2} \partial_x (u(t,x))^2 , \quad u(0,x) = u_0(x),\quad t \in \mathbb{R}, \quad x \in \mathbb{T}= [-\pi,\pi],
\end{equation}
where for practical implementation issues we impose periodic boundary conditions. For local-wellposedness results of the periodic KdV equation in low regularity spaces we refer to \cite{Bour93,Gub11,Tao06}.

In the context of the numerical time integration of (non)linear partial differential equations splitting methods as well as exponential integrators contribute  attractive classes of integration methods. We refer to \cite{HLW,HochOst10,HLRS10,McLacQ02} for an extensive overview, and in particular to \cite{Faou12,Gau11,Lubich08} for the analysis of splitting methods for Schr\"odinger(-Poisson) equations. In recent years, splitting as well as exponential integration schemes (including Lawson type Runge-Kutta methods \cite{Law67}) have also gained a lot of attention in the context of the numerical integration of the KdV equation, see for instance \cite{HLR12,HKRT12,HKR99,KT05,Klein06,T74} and the references therein. We also refer to~\cite{OE15} for a splitting approach for the Kadomtsev-Petviashvili equation.

In particular, a distinguished convergence result was obtained in \cite{HKRT12,HLR12}. In the latter it was proven that the Strang splitting, where the right-hand side of the KdV equation is split into the linear and Burgers part, respectively, is second-order convergent in $H^r$ for initial data in $H^{r+5}$ for $r \geq 1$ assuming that the Burgers part is solved exactly. 

Here we derive a first-order exponential-type time-integrator for the KdV equation \eqref{eq:kdv} based  on Duhamel's formula
\begin{equation}\label{kdv}
u(t) = \mathrm{e}^{-\partial_{x}^3 t}u_0 + \frac{1}{2}\int_0^t \mathrm{e}^{-\partial_{x}^3 (t-s)} \partial_{x} (u(s))^2 \mathrm{d} s
\end{equation}
looking at the ``twisted variable" $v (t) = \mathrm{e}^{\partial_{x}^3 t} u(t)$. This idea of ``twisting" the variable is widely used in the analysis of partial differential equations in low regularity spaces (see, for instance \cite{Bour93,Gub11,Tao06} for the periodic KdV equation) and also well known in the context of numerical analysis, see \cite{Law67} for the introduction of Lawson type Runge-Kutta methods. However, instead of approximating the appearing integral with a Runge-Kutta method (see for instance \cite{KT05}) we use the key relation
\begin{align}\label{eq:key}
k_1^3+k_2^3 - (k_1+k_2)^3 = - 3 (k_1+k_2) k_1k_2
\end{align}
which allows us to overcome the \emph{loss of derivative} by integrating the \emph{stiff parts} (i.e., the terms involving $\partial_x^3$) exactly. The derived exponential-type integrator is unconditionally  stable and we will in particular show its first-order convergence in $H^1$ for initial data in $H^3$. A key tool in our convergence analysis is a variant of \cite[Lemma 3.1]{HLR12}.

 The presented technique can  be generalized to higher-order methods. We outline the construction of a second-order exponential-type integrator in Remark \ref{rem:2ord}. \\

\textbf{Notation: } In the following we will denote the Fourier expansion of some function $f \in L^2(\mathbb{T})$ by $
f(x) = \sum_{k \in \mathbb{Z}} \hat{f}_k \mathrm{e}^{i k x}$. Furthermore, we will use the notation
\begin{equation}\label{nota}
(\partial_x^{-1})_k : = 
\left\{
\begin{array}{ll}
(ik)^{-1} &\mbox{if} \quad k \neq 0\\
0 & \mbox{if}\quad k = 0 
\end{array}
\right.,  \quad \text{ i.e.,} \qquad\partial_x^{-1} f(x) =\sum_{\substack{k\in \mathbb{Z}\\k \neq 0} } (ik)^{-1} \hat{f}_k \mathrm{e}^{ikx}.
\end{equation}

\section{An exponential-type integrator}
To illustrate the idea we first consider initial values with zero mean. In Remark \ref{rem:0mn0} we point out the generalization to general initial values.

\begin{ass}\label{as0}
Assume that the zero-mode of the initial value is zero, i.e., $\hat{u}_0(0) =(2\pi)^{-1}\int_{\mathbb{T}}u(0,x) \mathrm{d} x= 0$. Note that the conservation of mass then implies that $\hat{u}_0(t) = 0$.
\end{ass}
We will derive a scheme for the ``twisted" variable $v (t) = \mathrm{e}^{\partial_{x}^3 t} u(t)$. With this transformation at hand the equation in $v$ reads
\begin{equation}\label{kdvT}
v(t) = v_0 + \frac{1}{2}\int_0^t \mathrm{e}^{s \partial_{x}^3} \partial_x (\mathrm{e}^{-\partial_{x}^3 s} v(s))^2 \mathrm{d}s
\end{equation}
such that
\begin{equation}\label{To}
v(t_n+\tau) =  v(t_n) + \frac{1}{2}\int_0^\tau \mathrm{e}^{(t_n+s) \partial_{x}^3} \partial_x\left (\mathrm{e}^{-\partial_{x}^3 (t_n+s)} v(t_n+s)\right)^2 \mathrm{d}s.
\end{equation}
For a small time-step $\tau$ we iterate Duhamel's formula \eqref{kdvT} and approximate the exact solution~\eqref{To} as follows
\begin{equation}
\begin{aligned}
v(t_n+\tau ) 
& \approx v(t_n) + \frac{1}{2}\int_0^\tau \mathrm{e}^{(t_n+s) \partial_{x}^3} \partial_x \left(\mathrm{e}^{-\partial_{x}^3 (t_n+s)} v(t_n)\right)^2 \mathrm{d}s\label{app}.
\end{aligned}
\end{equation}
The key relation \eqref{eq:key} now allows us the following integration technique (cf. \cite{Bour93,Gub11,Tao06}): We have
\begin{equation}\label{int}
\begin{aligned}
& \int_0^\tau \mathrm{e}^{(t_n+s) \partial_{x}^3} \partial_x (\mathrm{e}^{-\partial_{x}^3 (t_n+s)} v(t_n))^2 \mathrm{d}s\\
& = \sum_{k_1,k_2} \int_0^\tau \mathrm{e}^{-i(t_n+s) \big((k_1+k_2)^3-k_1^3-k_2^3\big)} i (k_1+k_2) \hat v_{k_1}(t_n) \hat v_{k_2}(t_n) \mathrm{e}^{i (k_1+k_2) x}\mathrm{d}s\\
& =  \sum_{k_1,k_2} \frac{ \mathrm{e}^{-i(t_n+\tau) \big((k_1+k_2)^3-k_1^3-k_2^3\big)}-  \mathrm{e}^{-it_n \big((k_1+k_2)^3-k_1^3-k_2^3\big)}}{-i\big( (k_1+k_2)^3-k_1^3-k_2^3\big)} i(k_1+k_2) \hat v_{k_1}(t_n) \hat v_{k_2}(t_n)\mathrm{e}^{i (k_1+k_2) x}\\
& =   \sum_{k_1,k_2} \left( \mathrm{e}^{-i(t_n+\tau) \big((k_1+k_2)^3-k_1^3-k_2^3\big)}-  \mathrm{e}^{-it_n \big((k_1+k_2)^3-k_1^3-k_2^3\big)}\right) \frac{1}{-3 k_1k_2} \hat v_{k_1}(t_n) \hat v_{k_2}(t_n)\mathrm{e}^{i (k_1+k_2) x}\\
& =  \frac{1}{3}\mathrm{e}^{\partial_{x}^3 (t_n+\tau)} \left( \mathrm{e}^{-\partial_{x}^3(t_n+\tau)} \partial_x^{-1}v(t_n)\right)^2 - \frac{1}{3}\mathrm{e}^{\partial_{x}^3 t_n} \left( \mathrm{e}^{-\partial_{x}^3t_n} \partial_x^{-1}v(t_n)\right)^2.
\end{aligned}
\end{equation}
Together with the approximation in \eqref{app} this yields that
\begin{align}\label{scheme}
v^{n+1}&= v^n + \frac{1}{6}\mathrm{e}^{\partial_{x}^3 (t_n+\tau)} \left( \mathrm{e}^{-\partial_{x}^3(t_n+\tau)} \partial_x^{-1}v^n\right)^2 - \frac{1}{6}\mathrm{e}^{\partial_{x}^3 t_n} \left( \mathrm{e}^{-\partial_{x}^3t_n} \partial_x^{-1}v^n\right)^2,
\end{align}
where $\partial_x^{-1}$ is defined in \eqref{nota} and by construction $\hat{v}_0^{n+1} = 0$, see Remark \ref{R:ZM-scheme}  below.
\begin{remark}\label{R:ZM-scheme} The zero-mode is preserved by the scheme \eqref{scheme} as the key relation \eqref{eq:key} implies that
\begin{equation*}
\begin{aligned}
\hat{v}_0^{n+1} & =
\hat{v}_0^n - \frac{1}{6}   \sum_{k_1+k_2=0}\left( \mathrm{e}^{-i(t_n+\tau)3 (k_1+k_2) k_1 k_2}-  \mathrm{e}^{-it_n 3 (k_1+k_2) k_1 k_2}\right) \frac{1}{ k_1k_2} \hat v_{k_1}^n \hat v_{k_2}^n = \hat{v}_0^n.
\end{aligned}
\end{equation*}
\end{remark}
In order to obtain an approximation to the original solution $u(t_n)$ of the KdV equation~\eqref{eq:kdv} at time $t_n = n \tau$ we then ``twist" the variable back again by setting $u^{n}= \mathrm{e}^{- \partial_x^3 t_n} v^{n}$. This yields the following exponential-type integrator for the KdV equation \eqref{eq:kdv}\begin{equation}\label{schemeU}
u^{n+1}= \mathrm{e}^{-\tau \partial_x^3}u^n + \frac{1}{6} \Big( \mathrm{e}^{- \tau \partial_x^3} \partial_x^{-1} u^n\Big)^2 - \frac{1}{6} \mathrm{e}^{- \tau \partial_x^3}\Big( \partial_x^{-1}u^n\Big)^2,
\end{equation}
where $\partial_x^{-1}$ is defined in \eqref{nota} and  $ \hat{u}_0^{n+1}= 0$ thanks to Remark \ref{R:ZM-scheme}. For sufficiently smooth solutions the semi-discrete scheme \eqref{schemeU} is first-order convergent, see Corollary~\ref{cor:ucon} below for the precise convergence result.

\begin{remark}\label{rem:0mn0}
If $\hat{u}_0(0) = \alpha \neq 0$ we set $\tilde{u}:= u - \alpha$ and look at the modified KdV equation in $\tilde{u}$, i.e.,
\begin{equation}\label{modkdv}
\partial_t \tilde{u}+ \partial_{x}^3 \tilde{u}  = \alpha \partial_x \tilde{u} + \frac{1}{2} \partial_x (\tilde{u})^2 .
\end{equation}
Note that the solution $\tilde{u}(t)$  of the modified KdV equation \eqref{modkdv} satisfies $\hat{\tilde{u}}_0(t) = 0$ for all $t$ as by the conservation of mass we have that $\hat{u}_0(t) \equiv \hat{u}_0(0) = \alpha$.  Thus, we can proceed as above: We look at the twisted variable $\tilde{v}(t) = \mathrm{e}^{(\partial_x^3-\alpha \partial_x )t} \tilde{u}(t)$ and carry out an approximation as above, i.e.,
\begin{equation*}
\tilde{v}(t_n+\tau) \approx \tilde{v}(t_n) + \frac{1}{2}\int_0^\tau \mathrm{e}^{(t_n+s) (\partial_x^3- \alpha \partial_x )} \partial_x \left (\mathrm{e}^{- (t_n+s) (\partial_x^3- \alpha \partial_x )} \tilde{v}(t_n)\right)^2 \mathrm{d}s.
\end{equation*}
The relation
\begin{equation*}
- (k_1+k_2)^3 + \alpha (k_1+k_2) +   k_1^3+ k_2^3 - \alpha k_1 -\alpha k_2 = -(k_1+k_2)^3 + k_1^3 + k_2^3
\end{equation*}
then allows us to derive similarly to above an exponential-type integration scheme
\begin{equation}\label{vtS}
\begin{aligned}
&\tilde v^{n+1}= \tilde v^n + \frac{1}{6}\mathrm{e}^{(\partial_{x}^3-\alpha \partial_x) (t_n+\tau)} \left( \mathrm{e}^{-(\partial_{x}^3-\alpha \partial_x) (t_n+\tau)} \partial_x^{-1}  \tilde{v}^n\right)^2 - \frac{1}{6}\mathrm{e}^{(\partial_{x}^3-\alpha \partial_x)  t_n} \left( \mathrm{e}^{-(\partial_{x}^3-\alpha \partial_x) t_n} \partial_x^{-1}\tilde v^n\right)^2,
\end{aligned}
\end{equation}
where $\partial_x^{-1}$ is defined in \eqref{nota} and $ \hat{ \tilde{v}}^{n+1}_0 = 0$ cf. Remark \ref{R:ZM-scheme}. Finally, by setting $u^{n}= \mathrm{e}^{- (\partial_x^3-\alpha \partial_x)t_n} \tilde{v}^{n}+\alpha$ we then obtain an approximation to the exact solution $u(t_n)$ of the KdV equation \eqref{eq:kdv} (with non-zero zero-mode) at time $t_n = n \tau$.
\end{remark}

Note that higher-order approximations to the solution of the KdV equation \eqref{eq:kdv} can be obtained by truncating the expansion in \eqref{To}  later. In Remark \ref{rem:2ord} below we explain the construction of a second-order scheme in more detail.
\begin{remark}[A second-order exponential-type integrator]\label{rem:2ord}
In order to derive a second-order approximation in the ``twisted" variable $v (t)$ we need to include the second-order term in the Taylor-series expansion of $v(t_n+s)$ in \eqref{To}. More precisely, plugging the formal expansion
\begin{equation*}\label{2oa}
v(t_n+s) = v(t_n) + s v'(t_n) + \mathcal{O}\left(s^2 v''\right)
\end{equation*}
into Duhamel's formula \eqref{To} yields  that
\begin{equation}\label{2so}
\begin{aligned}
 v(t_n+\tau)  & =  v(t_n) + \frac{1}{2}\int_0^\tau \mathrm{e}^{(t_n+s) \partial_{x}^3} \partial_x\left (\mathrm{e}^{-\partial_{x}^3 (t_n+s)} \big(v(t_n) + s v'(t_n)\big)\right)^2 \mathrm{d}s + \mathcal{R}_1(\tau,t_n,v) \\
& =   v(t_n) + \frac{1}{2}\int_0^\tau \mathrm{e}^{(t_n+s) \partial_{x}^3} \partial_x\Big[\left (\mathrm{e}^{-\partial_{x}^3 (t_n+s)} v(t_n) \right)^2\\&\qquad \qquad \qquad \qquad  \qquad \qquad + 2s  \left(\mathrm{e}^{-\partial_{x}^3 (t_n+s)} v(t_n) \right)  \left (\mathrm{e}^{-\partial_{x}^3 (t_n+s)} v'(t_n) \right) \Big]
 \mathrm{d}s
\\& + \mathcal{R}_1(\tau,t_n,v) + \mathcal{R}_2(\tau,t_n,v),
\end{aligned}
\end{equation}
where the remainders satisfy for $r>1/2$ and some constant $c>0$ that
\begin{equation}\label{2reg}
\begin{aligned}
\Vert \mathcal{R}_1(\tau,t_n,v)\Vert_r+\Vert \mathcal{R}_2(\tau,t_n,v)\Vert_r \leq c \tau^3 \sup_{t_n \leq t \leq t_{n+1}} \Big(\Vert \partial_x \big( v  v''\big)(t)\Vert_r + \Vert \partial_x (v')^2(t)\Vert_r \Big).
 \end{aligned}
\end{equation}
In order to construct a numerical scheme based on the expansion \eqref{2so} we need to solve the appearing integral. The first term involving the product $v^2$ can be easily determined thanks to~\eqref{int}. Note that $\hat{v}_0(t) = \hat{v}'_0(t) = 0$. Thus, similarly, we obtain for the  $v v'-$ term with the aid of the key-relation \eqref{eq:key} and integration by parts that
\begin{equation}\label{2cal}
\begin{aligned}
& \int_0^\tau s \cdot \mathrm{e}^{(t_n+s) \partial_{x}^3} \partial_x    \left(\mathrm{e}^{-\partial_{x}^3 (t_n+s)} v(t_n) \right)  \left (\mathrm{e}^{-\partial_{x}^3 (t_n+s)} v'(t_n) \right) \mathrm{d}s\\
& =  \sum_{k_1,k_2} \int_0^\tau s \cdot  \mathrm{e}^{-i(t_n+s) \big((k_1+k_2)^3-k_1^3-k_2^3\big)} i (k_1+k_2) \hat v_{k_1}(t_n) \hat v^\prime_{k_2}(t_n) \mathrm{e}^{i (k_1+k_2) x}\mathrm{d}s\\
& =  \sum_{\substack{k_1\neq 0 ,k_2 \neq 0\\k_1+k_2 \neq 0}} \int_0^\tau s \cdot  \mathrm{e}^{-i(t_n+s) \big((k_1+k_2)^3-k_1^3-k_2^3\big)} i (k_1+k_2) \hat v_{k_1}(t_n) \hat v^\prime_{k_2}(t_n) \mathrm{e}^{i (k_1+k_2) x}\mathrm{d}s\\
& =\tau  \sum_{\substack{k_1\neq 0 ,k_2 \neq 0\\k_1+k_2 \neq 0}}  \mathrm{e}^{-i(t_n+\tau) \big((k_1+k_2)^3-k_1^3-k_2^3\big)}\frac{1}{-3k_1k_2} \hat v_{k_1}(t_n) \hat v^\prime_{k_2}(t_n) \mathrm{e}^{i (k_1+k_2) x} \\
&- \sum_{\substack{k_1\neq 0 ,k_2 \neq 0\\k_1+k_2 \neq 0}} \int_0^\tau   \mathrm{e}^{-i(t_n+s) \big((k_1+k_2)^3-k_1^3-k_2^3\big)} \frac{1}{-3k_1k_2} \hat v_{k_1}(t_n) \hat v^\prime_{k_2}(t_n) \mathrm{e}^{i (k_1+k_2) x}\mathrm{d}s\\
& =\tau \sum_{\substack{k_1\neq 0 ,k_2 \neq 0\\k_1+k_2 \neq 0}}   \mathrm{e}^{-i(t_n+\tau) \big((k_1+k_2)^3-k_1^3-k_2^3\big)}\frac{1}{-3k_1k_2} \hat v_{k_1}(t_n) \hat v^\prime_{k_2}(t_n) \mathrm{e}^{i (k_1+k_2) x} \\
& -  \sum_{\substack{k_1\neq 0 ,k_2 \neq 0\\k_1+k_2 \neq 0}}\frac{\mathrm{e}^{-i(t_n+\tau) \big((k_1+k_2)^3-k_1^3-k_2^3\big)}- \mathrm{e}^{-i t_n \big((k_1+k_2)^3-k_1^3-k_2^3\big)}}{9 i k_1^2k_2^2 (k_1+k_2)} \hat v_{k_1}(t_n) \hat v^\prime_{k_2}(t_n) \mathrm{e}^{i (k_1+k_2) x}\mathrm{d}s\\
& = \frac{\tau}{3} \mathrm{e}^{(t_n + \tau)\partial_x^3}    \left(\mathrm{e}^{-\partial_{x}^3 (t_n+\tau)}\partial_x^{-1} v(t_n) \right)  \left (\mathrm{e}^{-\partial_{x}^3 (t_n+\tau)} \partial_x^{-1}v'(t_n) \right)\\
& \quad -\frac{1}{9}\mathrm{e}^{(t_n+\tau)\partial_x^3} \partial_x^{-1} \left( \mathrm{e}^{-(t_n+\tau)\partial_x^3 }(\partial_x^{-1})^2 v(t_n)\right) \left( \mathrm{e}^{-(t_n+\tau) \partial_x^3} (\partial_x^{-1})^2 v'(t_n)\right) \\&\quad + \frac{1}{9}\mathrm{e}^{t_n\partial_x^3} \partial_x^{-1} \left( \mathrm{e}^{-t_n\partial_x^3 }(\partial_x^{-1})^2 v(t_n)\right) \left( \mathrm{e}^{-t_n \partial_x^3} (\partial_x^{-1})^2 v'(t_n)\right)
\end{aligned}
\end{equation}
with $\partial_x^{-1}$ defined in \eqref{nota}. Plugging the relations given in \eqref{int} and \eqref{2cal} together with the definition
\begin{equation}\label{vprime}
{v^\prime}^n := \frac12 \mathrm{e}^{t_n \partial_x^3}\partial_x \left( \mathrm{e}^{-t_n \partial_x^3} v^n\right)^2
\end{equation}
 (see \eqref{kdvT}) into the expansion \eqref{2so} builds the basis of our numerical scheme: As a second-order approximation to the solution $v(t_n+\tau)$ of \eqref{kdvT} we take the  exponential-type integration scheme
\begin{equation*}\label{2scheme}
\begin{aligned}
v^{n+1}&= v^n + \frac{1}{6}\mathrm{e}^{\partial_{x}^3 (t_n+\tau)} \left( \mathrm{e}^{-\partial_{x}^3(t_n+\tau)} \partial_x^{-1}v^n\right)^2 - \frac{1}{6}\mathrm{e}^{\partial_{x}^3 t_n} \left( \mathrm{e}^{-\partial_{x}^3t_n} \partial_x^{-1}v^n\right)^2\\
& +  \frac{\tau}{3} \mathrm{e}^{(t_n + \tau)\partial_x^3}    \left(\mathrm{e}^{-\partial_{x}^3 (t_n+\tau)}\partial_x^{-1} v^n\right)  \left (\mathrm{e}^{-\partial_{x}^3 (t_n+\tau)} \partial_x^{-1}{v^\prime}^n \right)\\
& - \frac{1}{9}\mathrm{e}^{(t_n+\tau)\partial_x^3} \partial_x^{-1} \left( \mathrm{e}^{-(t_n+\tau)\partial_x^3 }(\partial_x^{-1})^2 v^n\right) \left( \mathrm{e}^{-(t_n+\tau) \partial_x^3} (\partial_x^{-1})^2 {v^\prime}^n\right)  \\& + \frac{1}{9}\mathrm{e}^{t_n\partial_x^3} \partial_x^{-1} \left( \mathrm{e}^{-t_n\partial_x^3 }(\partial_x^{-1})^2 v^n\right) \left( \mathrm{e}^{-t_n \partial_x^3} (\partial_x^{-1})^2 {v^\prime}^n \right)
\end{aligned}
\end{equation*}
with ${v^\prime}^n$ given in \eqref{vprime}, $\partial_x^{-1}$ defined in \eqref{nota} and by construction $\hat{v}_0^{n+1} = 0$ (cf. Remark \ref{R:ZM-scheme}). 

The approximation to the original solution $u(t_n)$ of the KdV equation~\eqref{eq:kdv} at time $t_n = n \tau$ is then obtained by ``twisting" the variable back again, i.e., by setting $u^{n}= \mathrm{e}^{- \partial_x^3 t_n} v^{n}$. This yields that
\begin{equation}\label{2scheme}
\begin{aligned}
u^{n+1}&= \mathrm{e}^{-\partial_x^3 \tau} u^n + \frac{1}{6}\left( \mathrm{e}^{-\partial_{x}^3 \tau} \partial_x^{-1}u^n\right)^2 - \frac{1}{6}\mathrm{e}^{-\partial_{x}^3 \tau} \left( \partial_x^{-1}u^n\right)^2 +  \frac{\tau}{3}    \left(\mathrm{e}^{-\partial_{x}^3\tau}\partial_x^{-1} u^n\right)  \left (\mathrm{e}^{-\partial_{x}^3\tau }{\textstyle \frac12} (u^n)^2 \right)\\
& - \frac{1}{9}\partial_x^{-1} \left( \mathrm{e}^{-\partial_x^3\tau }(\partial_x^{-1})^2 u^n\right) \left( \mathrm{e}^{-\tau\partial_x^3} \partial_x^{-1} {\textstyle \frac12} (u^n)^2\right)   + \frac{1}{9}\partial_x^{-1}  \mathrm{e}^{-\partial_x^3\tau } \partial_x^{-1} \big( (\partial_x^{-1})^2 u^n\big) \left( \partial_x^{-1} {\textstyle \frac12} (u^n)^2 \right),
\end{aligned}
\end{equation}
where $\partial_x^{-1}$ is defined in \eqref{nota} and by construction $\hat{u}_0^{n+1} = 0$ (cf. Remark \ref{R:ZM-scheme}). The  semi-discret exponential-type integration scheme \eqref{2scheme} \emph{allows formally second-order convergence} in $H^r$ for sufficiently smooth solutions $u(t) \in H^{r+5}$ with $r>1/2$ thanks to the local error bound \eqref{2reg} together with the observation that
$$
\Vert v''(t) \Vert_r \leq c \left(\Vert \partial_x^2 v (t)\Vert_r^3 + \Vert \partial_x^4 v (t)\Vert_r^2\right) = c  \left(\Vert \partial_x^2 u (t)\Vert_r^3 + \Vert \partial_x^4 u (t)\Vert_r^2\right).
$$
We do not pursuit this here and only underline the second-order convergence rate numerically in Section \ref{sec:num}.
\end{remark}
In the following section we give a detailed convergence analysis of the first-order exponential-type integration scheme \eqref{schemeU}.

\section{Error analysis}
For simplicity we carry out the error analysis for initial values satisfying Assumption \ref{as0}. Furthermore, in the following we denote by $\langle \cdot, \cdot \rangle$ the $L^2$ scalar product, i.e., $\langle f,g\rangle = \int_{\mathbb{T}} f g \mathrm{d}x$ and by $\Vert \cdot \Vert_{L^2}$ the corresponding $L^2$ norm. 

In order to  obtain a convergence result in $H^1$ we follow the strategy presented in \cite{Lubich08,HLR12}: We first prove convergence order of one half of the numerical scheme \eqref{scheme} in $H^2$ for solutions in $H^3$, see Section \ref{sec:h2} Theorem \ref{thm:main0}. This yields essential a priori bounds on the numerical solution in $H^2$ and allows us to prove first-order convergence globally in $H^1$, see Theorem \ref{thm:main} in Section~\ref{sec:gbH1}. The latter in particular implies first-order convergence of the exponential-type integration scheme \eqref{schemeU} towards the KdV solution~\eqref{eq:kdv}, see Corollary \ref{cor:ucon} below for the precise convergence result.

\subsection{Error analysis in $H^2$}\label{sec:h2}
We commence with the error analysis of the numerical scheme~\eqref{scheme} in $H^2$. In Section \ref{sec:stab} we carry out the stability analysis in $H^2$. In Section \ref{sec:lerr} we show that the method is consistent of order one half in $H^2$ for solutions in $H^3$.
\subsubsection{Stability analysis}\label{sec:stab}
 Set 
\begin{align}\label{phiS}
\Phi^\tau_t(v) :=v + \frac{1}{6}\mathrm{e}^{\partial_{x}^3 (t+\tau)} \left( \mathrm{e}^{-\partial_{x}^3(t+\tau)} \partial_x^{-1}v\right)^2 - \frac{1}{6}\mathrm{e}^{\partial_{x}^3 t} \left( \mathrm{e}^{-\partial_{x}^3t} \partial_x^{-1}v\right)^2
\end{align}
such that for all $k$ we have $v^{k+1}= \Phi^\tau_{t_k} (v^k)$. The following stability result holds for the numerical flow $\Phi^\tau_t$:
\begin{lemma}\label{lem:stab}
Let $f \in H^2$ and $g \in H^3$. Then, for all $t \in \mathbb{R}$ we have
\begin{equation*}
\left \Vert \partial_x^2( \Phi^\tau_t(f) - \Phi^\tau_t(g)) \right \Vert_{L^2}  \leq \mathrm{exp}( \tau L) \Vert \partial_x^2( f-g)\Vert_{L^2},
\end{equation*}
where $L$ depends on $\Vert \partial_x^2 f \Vert_{L^2}$ and $\Vert \partial_x^3 g \Vert_{L^2}$.
\end{lemma}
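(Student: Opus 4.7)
My plan is to set up an energy-type estimate for $E(\tau):=\|\partial_x^2 W^\tau\|_{L^2}^2$, where $W^\tau:=\Phi^\tau_t(f)-\Phi^\tau_t(g)$, show that $E(\tau)\le (1+\tau L')E(0)$, and then invoke $\sqrt{1+x}\le e^{x/2}$. The factorisation $a^2-b^2=(a+b)(a-b)$ applied to the definition \eqref{phiS} of $\Phi^\tau_t$, together with the integral representation underlying \eqref{int}, gives
\begin{equation*}
W^\tau \;=\; (f-g) + \int_0^\tau R^s\,\mathrm{d}s,\qquad R^s:=\tfrac12\,\mathrm{e}^{(t+s)\partial_x^3}\partial_x\bigl[P_s\,Q_s\bigr],
\end{equation*}
with $P_s:=\mathrm{e}^{-(t+s)\partial_x^3}(f+g)$ and $Q_s:=\mathrm{e}^{-(t+s)\partial_x^3}(f-g)$. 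Because the integrand depends only on the fixed data $f,g$ (not on $W^s$), no dynamical Gronwall is needed; one only has to control one integral.

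Expanding $E(\tau)-E(0)=2\bigl\langle\partial_x^2(f-g),\,\partial_x^2\!\int_0^\tau R^s\,\mathrm{d}s\bigr\rangle+\bigl\|\partial_x^2\!\int_0^\tau R^s\,\mathrm{d}s\bigr\|_{L^2}^2$, I treat the two contributions separately. For the cross term, using that $\mathrm{e}^{(t+s)\partial_x^3}$ is an $L^2$-isometry, it equals $\tfrac12\int_0^\tau\langle Q_s'',\,\partial_x^3[P_s Q_s]\rangle\,\mathrm{d}s$. A naïve Leibniz expansion of $\partial_x^3[P_s Q_s]$ produces a term $P_s'''Q_s$ that would formally require $f+g\in H^3$; the key is the \emph{asymmetric} decomposition
\begin{equation*}
\partial_x^3(f+g) \;=\; 2\partial_x^3 g + \partial_x^3(f-g),
\end{equation*}
so that the unavoidable third derivative lands on $g\in H^3$ and is controlled by $\|\partial_x^3 g\|_{L^2}$, while the $\partial_x^3(f-g)$ piece pairs with the $P_s Q_s'''$ contribution and is killed by the Burgers-type identity $\int P\,Q''\,Q'''=-\tfrac12\int P'(Q'')^2$. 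Hölder, the Sobolev embedding $H^1(\mathbb{T})\hookrightarrow L^\infty$, and the Poincaré inequality (applicable because the scheme preserves zero mean by Remark~\ref{R:ZM-scheme}) give $|\langle Q_s'',\partial_x^3[P_s Q_s]\rangle|\le L_1\|\partial_x^2(f-g)\|_{L^2}^2$ uniformly in $s$, hence the cross term is bounded by $\tau L_1\,E(0)$.

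For the quadratic term I would work on the Fourier side, starting from the computation already carried out in \eqref{int}: the $k$-th Fourier coefficient of $\partial_x^2\!\int_0^\tau R^s\,\mathrm{d}s$ is
\begin{equation*}
\frac{k^2}{6}\!\!\sum_{\substack{k_1+k_2=k\\k_1,k_2\neq 0}}\!\!\frac{\mathrm{e}^{-3i(t+\tau)kk_1k_2}-\mathrm{e}^{-3itkk_1k_2}}{k_1 k_2}\,\widehat{(f+g)}_{k_1}\,\widehat{(f-g)}_{k_2}.
\end{equation*}
The elementary bound $|\mathrm{e}^{i\theta}-1|\le\sqrt{2|\theta|}$ produces a gain of $\sqrt{\tau\,|kk_1k_2|}$, dominating each coefficient by $\sqrt{\tau}\,|k|^{5/2}/\sqrt{|k_1k_2|}$ times the moduli of the Fourier data. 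Parseval, Cauchy--Schwarz, and the Banach-algebra property of $H^{5/2}(\mathbb{T})$ (combined with the identification $\sum_k(1+k^2)^{5/2}|\widehat{(f+g)}_k|^2/|k|\lesssim\|\partial_x^2(f+g)\|_{L^2}^2$, and analogously for $w$) then bound the quadratic term by $\tau L_2\,\|\partial_x^2(f-g)\|_{L^2}^2$ with $L_2$ depending on $\|\partial_x^2 f\|_{L^2}$ and $\|\partial_x^3 g\|_{L^2}$. Combining yields $E(\tau)\le(1+\tau L)E(0)$ and the claim.

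The hardest step is the cross-term integration by parts: one has to recognise that the spurious $\partial_x^3(f+g)$ factor splits into an $L^2$-piece carried by $\partial_x^3 g$ and a self-cancelling piece handled by the Burgers-type identity. This asymmetric treatment of $f$ and $g$ is exactly what justifies the asymmetric hypotheses of the lemma and is the content of the announced variant of \cite[Lemma~3.1]{HLR12}.
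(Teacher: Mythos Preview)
Your argument is correct in substance and uses the same essential ingredients as the paper---the expansion of $\|\partial_x^2 W^\tau\|_{L^2}^2$ into a cross term and a quadratic term, the Burgers-type identity $\int h\,Q''Q'''=-\tfrac12\int h'(Q'')^2$, the asymmetric splitting $P'''=2\partial_x^3 g_s+Q'''$ that routes the unavoidable third derivative onto $g$, and the key relation~\eqref{eq:key} on the Fourier side---but the organisation differs noticeably from the paper's. For the cross term the paper does \emph{not} pass to the integral representation $\int_0^\tau R^s\,\mathrm{d}s$; instead it keeps the two-point difference $(\cdot)|_{t+\tau}-(\cdot)|_t$, applies the Burgers identities to bring every surviving summand into the canonical form $\langle\partial_x^2 u,\,vw-\mathrm{e}^{\tau\partial_x^3}[(\mathrm{e}^{-\tau\partial_x^3}v)(\mathrm{e}^{-\tau\partial_x^3}w)]\rangle$, and then invokes a single reusable Fourier estimate (Lemma~\ref{lem:kb}) which extracts the factor $\tau$ from $|1-\mathrm{e}^{-i\tau 3k_1k_2(k_1+k_2)}|\le 3\tau|k_1k_2(k_1+k_2)|$. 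Your approach gains the same $\tau$ from the length of the integration interval and then bounds the integrand directly; this is arguably more elementary for the cross term (no auxiliary lemma is needed) but forfeits the modularity of Lemma~\ref{lem:kb}. For the quadratic term the paper again writes $I_2$ as a sum of two ``difference'' inner products $I_2^a+I_2^b$ and applies the linear bound $|1-\mathrm{e}^{i\theta}|\le|\theta|$, whereas you bound $\|\partial_x^2\!\int_0^\tau R^s\,\mathrm{d}s\|_{L^2}$ by $\sqrt{\tau}$ via $|{\mathrm{e}}^{i\theta}-1|\le\sqrt{2|\theta|}$ and then square; your $H^{5/2}$ Banach-algebra packaging of the resulting convolution is correct once one notes that $\sum_{k\ne 0}|k|^{5}\cdot|k|^{-1}|\hat h_k|^2=\|\partial_x^2 h\|_{L^2}^2$. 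Two cosmetic points: the constant in your cross term should be $1$ rather than $\tfrac12$ (the factor $2$ from the polarisation cancels the $\tfrac12$ in $R^s$), and the $\partial_x^3(f-g)$ piece need not ``pair'' with the $P_sQ_s'''$ term---each of $\int Q_sQ_s''Q_s'''$ and $\int P_sQ_s''Q_s'''$ is handled separately by the same Burgers identity.
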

\begin{proof}
Note that
\begin{equation*}
\begin{aligned}\label{eq:Lf}
& \Vert \partial_x^2(\Phi^\tau_t(f) - \Phi^\tau_t(g)) \Vert_{L^2}^2  = \Vert \partial_x^2 (f - g )\Vert_{L^2}^2\\
& + \frac{1}{3} \langle \partial_x^2  \mathrm{e}^{\partial_{x}^3 (t+\tau)}\Big [  \left( \mathrm{e}^{-\partial_{x}^3(t+\tau)} \partial_x^{-1}f\right)^2- \left( \mathrm{e}^{-\partial_{x}^3(t+\tau)} \partial_x^{-1}g\right)^2\Big], \partial_x^2 (f-g)\rangle\\&
 - \frac{1}{3}\langle \partial_x^2 \mathrm{e}^{\partial_{x}^3 t} \Big[ \left( \mathrm{e}^{-\partial_{x}^3t} \partial_x^{-1}f\right)^2- \left( \mathrm{e}^{-\partial_{x}^3t} \partial_x^{-1}g\right)^2\Big], \partial_x^2 (f-g)\rangle\\
 & + \frac{1}{6^2} \Vert 
 \partial_x^2  \mathrm{e}^{\partial_{x}^3 (t+\tau)}\Big [  \left( \mathrm{e}^{-\partial_{x}^3(t+\tau)} \partial_x^{-1}f\right)^2- \left( \mathrm{e}^{-\partial_{x}^3(t+\tau)} \partial_x^{-1}g\right)^2\Big]\\&
 -
 \partial_x^2 \mathrm{e}^{\partial_{x}^3 t} \Big[ \left( \mathrm{e}^{-\partial_{x}^3t} \partial_x^{-1}f\right)^2- \left( \mathrm{e}^{-\partial_{x}^3t} \partial_x^{-1}g\right)^2\Big]\Vert_{L^2}^2\\
 & =: \Vert \partial_x^2 (f-g)\Vert_{L^2}^2 + \frac{1}{3}I_1 + \frac{1}{6^2} I_2.
\end{aligned}
\end{equation*}
Lemma \ref{lem:I1} and Lemma \ref{lem:I2} below allow us the following bounds on $I_1$ and $I_2$: We have
\begin{equation}\label{I12}
\vert I_1+I_2 \vert \leq \tau L \Vert \partial_x^2 (f-g)\Vert_{L^2}^2,
\end{equation}
where $L$ depends on $\Vert \partial_x^2 f \Vert_{L^2}$ and $\Vert \partial_x^3 g \Vert_{L^2}$. Hence,
\begin{equation*}
\Vert \partial_x^2\big(\Phi^\tau_t(f) - \Phi^\tau_t(g)\big) \Vert_{L^2}^2 \leq (1 + \tau L ) \Vert \partial_x^2 (f-g)\Vert_{L^2}^2
\end{equation*}
which yields the assertion.
\end{proof}
In the rest of Section \ref{sec:stab} we will show the essential bound \eqref{I12}. We start with a useful Lemma.
\begin{lemma}\label{lem:kb}
The following estimates hold for $u,v,w \in H^2$
\begin{equation}\label{bb}\begin{aligned}
&\vert \langle \partial_x^2 u, v w - \mathrm{e}^{\partial_x^3 \tau} \Big[ \left(\mathrm{e}^{-\partial_x^3 \tau} v \right)  \left(\mathrm{e}^{-\partial_x^3 \tau}w \right)\Big] \rangle \vert  \leq c \tau \Vert \partial_x^2 u \Vert_{L^2} \Vert \partial_x^2 v \Vert_{L^2}\Vert \partial_x^2 w \Vert_{L^2}\\
&\vert \langle  u, (\partial_x v)^2 - \mathrm{e}^{\partial_x^3 \tau} \left(\mathrm{e}^{-\partial_x^3 \tau} \partial_x v \right)^2 \rangle \vert  \leq c \tau \Vert \partial_x^2 u \Vert_{L^2} \Vert \partial_x^2 v \Vert_{L^2}^2
\end{aligned}
\end{equation}
\end{lemma}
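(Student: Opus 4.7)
The plan is to rewrite the difference inside each pairing as a time-integral over $[0,\tau]$ via the fundamental theorem of calculus, which both generates the factor $\tau$ and, through the Leibniz identity
\[
\partial_x^3(AB)-(\partial_x^3 A)B-A(\partial_x^3 B) = 3\partial_x(\partial_x A\cdot\partial_x B),
\]
reorganises the three $\partial_x$'s from $\partial_x^3$ as a single outer $\partial_x$ acting on a product of two single-$\partial_x$ terms. This cancellation mirrors the key relation \eqref{eq:key} at the differential level and is the mechanism that permits a sharp estimate in $H^2$.

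For the first bound, I would set $v_s:=\mathrm{e}^{-s\partial_x^3}v$, $w_s:=\mathrm{e}^{-s\partial_x^3}w$, and $F(s):=\mathrm{e}^{s\partial_x^3}(v_sw_s)$. Using $\partial_s v_s=-\partial_x^3 v_s$ together with the identity above gives $F'(s)=3\,\mathrm{e}^{s\partial_x^3}\partial_x(\partial_x v_s\cdot\partial_x w_s)$, so that
\[
vw-\mathrm{e}^{\tau\partial_x^3}\bigl((\mathrm{e}^{-\tau\partial_x^3}v)(\mathrm{e}^{-\tau\partial_x^3}w)\bigr) = -3\int_0^\tau\mathrm{e}^{s\partial_x^3}\partial_x\bigl(\partial_x v_s\cdot\partial_x w_s\bigr)\,\mathrm{d}s.
\]
Pairing with $\partial_x^2 u$, moving $\mathrm{e}^{s\partial_x^3}$ off by $L^2$-unitarity, expanding the outer $\partial_x$ by Leibniz as $\partial_x^2 v_s\,\partial_x w_s+\partial_x v_s\,\partial_x^2 w_s$, and estimating each piece by a Hölder $L^2\cdot L^2\cdot L^\infty$ split together with the 1D Sobolev embedding $H^1\hookrightarrow L^\infty$ and the $H^r$-isometry of $\mathrm{e}^{-s\partial_x^3}$, each integrand is bounded by $c\,\Vert\partial_x^2 u\Vert_{L^2}\Vert\partial_x^2 v\Vert_{L^2}\Vert\partial_x^2 w\Vert_{L^2}$, and the $s$-integration supplies the factor $\tau$.

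For the second bound, I would redefine $w_s:=\mathrm{e}^{-s\partial_x^3}\partial_x v$ and $G(s):=\mathrm{e}^{s\partial_x^3}(w_s)^2$; the identity above with $A=B=w_s$ yields $G'(s)=3\,\mathrm{e}^{s\partial_x^3}\partial_x((\partial_x w_s)^2)$ and hence
\[
(\partial_x v)^2-\mathrm{e}^{\tau\partial_x^3}(\mathrm{e}^{-\tau\partial_x^3}\partial_x v)^2 = -3\int_0^\tau\mathrm{e}^{s\partial_x^3}\partial_x\bigl((\partial_x w_s)^2\bigr)\,\mathrm{d}s.
\]
The crucial additional step is to integrate the outer $\partial_x$ by parts onto $u$ \emph{before} taking absolute values: via $\langle u,\mathrm{e}^{s\partial_x^3}\partial_x f\rangle = -\langle \mathrm{e}^{-s\partial_x^3}\partial_x u,f\rangle$ and $\partial_x w_s = \mathrm{e}^{-s\partial_x^3}\partial_x^2 v$ this becomes
\[
\langle u,(\partial_x v)^2-\mathrm{e}^{\tau\partial_x^3}(\mathrm{e}^{-\tau\partial_x^3}\partial_x v)^2\rangle = 3\int_0^\tau \langle \mathrm{e}^{-s\partial_x^3}\partial_x u,\,(\mathrm{e}^{-s\partial_x^3}\partial_x^2 v)^2\rangle\,\mathrm{d}s.
\]
The elementary inequality $|\int fg^2\,\mathrm{d}x|\le\Vert f\Vert_{L^\infty}\Vert g\Vert_{L^2}^2$, combined with $H^1\hookrightarrow L^\infty$, the $H^1$-isometry of $\mathrm{e}^{-s\partial_x^3}$, and the Fourier bound $\Vert\partial_x u\Vert_{H^1}\le 2\Vert\partial_x^2 u\Vert_{L^2}$ (valid for all $u\in H^2$ since $k^2\le k^4$ for $k\in\mathbb{Z}$), then closes the argument.

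The main obstacle I anticipate is the sharpness of the second estimate at the regularity $v\in H^2$. A naive Fourier argument based on $|1-\mathrm{e}^{i\theta}|\le|\theta|$ produces the multiplier $|k|(k_1k_2)^2$, which distributes at best as $\Vert\partial_x u\Vert_{L^2}\Vert\partial_x^2 v\Vert_{L^4}^2$ and hence would force $v\in H^{9/4}$. The Leibniz cancellation above, followed by integration by parts, is precisely what removes this extra fractional derivative: two of the three $\partial_x$'s collapse into an outer divergence which, once passed to $u$, leaves only two $\partial_x$'s on $v$, so that the final Hölder estimate closes cleanly in $H^2$.
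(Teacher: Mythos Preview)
Your proof is correct and complete. It differs from the paper's argument in presentation rather than in essence: the paper works entirely on the Fourier side, using the algebraic identity $(k_1+k_2)^3-k_1^3-k_2^3=3(k_1+k_2)k_1k_2$ together with the elementary bound $|1-\mathrm{e}^{i\theta}|\le|\theta|$ to extract the factor $\tau$ and a multiplier $3\tau(k_1+k_2)k_1k_2$, and then closes with Cauchy--Schwarz and Young's convolution inequality on the weighted Fourier coefficients. Your fundamental-theorem-of-calculus step and the Leibniz identity $\partial_x^3(AB)-(\partial_x^3A)B-A(\partial_x^3B)=3\partial_x(\partial_xA\,\partial_xB)$ are precisely the physical-space incarnation of these two moves, and your H\"older/Sobolev estimates play the role of the paper's $l^1$--$l^2$ convolution bounds.

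One remark on your closing paragraph: the paper's Fourier argument for the second estimate is \emph{not} the ``naive'' one you describe. After arriving at the multiplier $|k_1+k_2|(k_1k_2)^2$, the paper applies Cauchy--Schwarz in the variable $k_1$ (rather than $k_1+k_2$), isolating one factor $k_1^2|\hat v_{k_1}|$ and leaving a convolution $u^{(1)}\!*v^{(2)}$ whose $l^2$ norm is controlled via $\|u^{(1)}\|_{l^1}\le c\|\partial_x^2u\|_{L^2}$. This reshuffling of which factor absorbs the $l^1$ norm is the Fourier counterpart of your integration by parts that moves a $\partial_x$ onto $u$; both devices avoid the spurious $H^{9/4}$ loss. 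So the two proofs are really the same mechanism seen through different lenses --- your version has the advantage of being coordinate-free and perhaps easier to adapt to nonperiodic settings, while the paper's makes the cancellation structure fully explicit at the level of frequencies.
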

for some constant $c> 0$.
\begin{proof}
The key relation \eqref{eq:key} together with the Cauchy-Schwarz inequality allows us the following bound
\begin{equation}\label{a1}
\begin{aligned}
& \vert \langle \partial_x^2 u  , v w - \mathrm{e}^{\partial_x^3 \tau} \Big[ \left(\mathrm{e}^{-\partial_x^3 \tau} v \right)  \left(\mathrm{e}^{-\partial_x^3 \tau}w \right)\Big] \rangle \vert  \\
&= \vert \sum_{k_1,k_2} (k_1+k_2)^2  \hat{u}_{-(k_1+k_2)} \left( 1 - \mathrm{e}^{- i\tau \big( (k_1+k_2)^3 - k_1^3 - k_2^3\big)}\right) \hat v_{k_1}\hat{w}_{k_2} \vert \\
& = \vert \sum_{k_1,k_2}(k_1+k_2)^2 \hat{u}_{-(k_1+k_2)} \left( 1 - \mathrm{e}^{- i \tau 3 k_1 k_2 (k_1+k_2)}\right) \hat v_{k_1}\hat{w}_{k_2} \vert \\
& \leq 3 \tau  \sum_{k_1,k_2} \vert (k_1+k_2)^2  \hat{u}_{-(k_1+k_2)}\vert \vert (k_1+k_2) k_1k_2\hat v_{k_1}  \hat{w}_{k_2}\vert \\
&  = 3 \tau \sum_{l,k}l^2 \vert \hat{u}_{-l} \vert \vert l  k(l-k) \vert\vert \hat{v}_{k} \hat{w}_{l-k}\vert\\
& \leq 3 \tau \sum_{l,k}l^2 \vert \hat{u}_{-l} \vert  \left( \vert k ( l-k)^2\vert  \vert  \hat{v}_{k} \hat{w}_{l-k}\vert + \vert k\vert^2\vert l - k\vert \vert \hat{v}_{k} \hat{w}_{l-k}\vert \right)\\
& \leq 3 \tau \big ( \sum_l  l^4 \vert \hat{u}_{l}\vert^2\big)^{1/2} \Big( \sum_l \big( \sum_k \vert k\vert \vert\hat{v}_k\vert \vert l-k\vert^2 \vert \hat{w}_{l-k}\vert\big)^2 \Big)^{1/2}\\
& + 3 \tau \big ( \sum_l  l^4 \vert \hat{u}_{l}\vert^2\big)^{1/2}  \Big( \sum_l \big(\sum_k \vert k\vert^2 \vert\hat{v}_k\vert \vert l - k\vert \vert \hat{w}_{l-k}\vert\big)^2 \Big)^{1/2}\\
& \leq 3 \tau \Vert \partial_x^2 u \Vert_{L^2} \big( \Vert v^{(1)} \ast w^{(2)}\Vert_{l^2} + \Vert v^{(2)} \ast w^{(1)} \Vert_{l^2}\big),
\end{aligned}
\end{equation}
where $v^{(j)}(k) := |k|^j \vert \hat{v}_k\vert $ and $w^{(j)}(k) : = |k|^j \vert \hat{w}_k\vert$. By the Young and Cauchy-Schwarz inequality we furthermore obtain that
\begin{equation}\label{a2}
\begin{aligned}
\Vert v^{(1)} \ast w^{(2)}\Vert_{l^2} + \Vert v^{(2)} \ast w^{(1)} \Vert_{l^2} &\leq \Vert v^{(1)}\Vert_{l^1} \Vert w^{(2)}\Vert_{l^2} + \Vert w^{(1)}\Vert_{l^1} \Vert v^{(2)}\Vert_{l^2} \leq c \Vert v^{(2)} \Vert_{l^2} \Vert w^{(2)}\Vert_{l^2} \\
& \leq c \Vert \partial_x^2 v\Vert_{L^2}\Vert \partial_x^2 w \Vert_{L^2}
\end{aligned}
\end{equation}
for some constant $c>0$. Plugging \eqref{a2} into \eqref{a1} yields the first assertion.

Similarly we have that
\begin{equation}
\begin{aligned}
& \vert \langle  u, (\partial_x v)^2 - \mathrm{e}^{\partial_x^3 \tau} \left(\mathrm{e}^{-\partial_x^3 \tau} \partial_x v \right)^2 \rangle \vert \leq 3 \tau \sum_{k_1,k_2} \vert (k_1+k_2) \hat{u}_{-(k_1+k_2)} \vert \vert k_1 k_2\vert^2  \vert \hat{v}_{k_1}\hat{v}_{k_2}\vert \\
& = 3 \tau \sum_{k,l}\vert l \vert \vert \hat{u}_{-l}\vert k^2 (l-k)^2 \vert \hat{v}_k \hat{v}_{k-l}\vert\leq 3 \tau \Big( \sum_{k} k^4 \vert \hat{v}_k\vert^2\Big)^{1/2} \Vert u^{(1)} \ast v^{(2)}\Vert_{l^2} \\
& \leq c \tau \Vert \partial_x^2 v \Vert_{L^2} \Vert u^{(1)}\Vert_{l^1} \Vert v^{(2)}\Vert_{l^2} \leq c \tau \Vert \partial_x^2 v \Vert_{L^2}^2 \Vert \partial_x^2 u \Vert_{L^2}
\end{aligned}
\end{equation}
which yields the second assertion.
\end{proof}
\begin{lemma}[Bound on $I_1$]\label{lem:I1}
We have
\begin{equation*}
\vert I_1 \vert \leq c \tau \Big(\Vert \partial_x^2 (f-g) \Vert_{L^2} + \Vert \partial_x^3 g\Vert_{L^2} \Big) \Vert \partial_x^2 (f - g) \Vert_{L^2}^2 
\end{equation*}
for some constant $c > 0$.
\end{lemma}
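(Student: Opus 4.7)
The plan is to reduce $I_1$ to an expression of the form appearing in the first inequality of Lemma~\ref{lem:kb} and then apply that lemma together with a Poincaré-type estimate (available since all functions in sight have zero mean thanks to Assumption~\ref{as0} and the definition of $\partial_x^{-1}$ in \eqref{nota}).

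First, I would factor the squares via the identity $a^2-b^2=(a-b)(a+b)$. Writing
\[
V:=\partial_x^{-1}(f-g),\qquad W:=\partial_x^{-1}(f+g),
\]
the bracket in $I_1$ becomes $(\mathrm{e}^{-\partial_x^3 s}V)(\mathrm{e}^{-\partial_x^3 s}W)$ evaluated at $s=t+\tau$ and $s=t$. Next I would exploit the group property $\mathrm{e}^{\partial_x^3(t+\tau)}=\mathrm{e}^{\partial_x^3 t}\mathrm{e}^{\partial_x^3 \tau}$ and the $L^2$-unitarity of $\mathrm{e}^{\partial_x^3 t}$ to shift the time $t$ onto the other slot of the inner product. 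Setting $\tilde V:=\mathrm{e}^{-\partial_x^3 t}V$, $\tilde W:=\mathrm{e}^{-\partial_x^3 t}W$ and $u:=\mathrm{e}^{-\partial_x^3 t}(f-g)$, this rewriting yields
\[
I_1=-\bigl\langle \partial_x^2 u,\ \partial_x^2\bigl[\tilde V\tilde W-\mathrm{e}^{\partial_x^3\tau}\bigl((\mathrm{e}^{-\partial_x^3\tau}\tilde V)(\mathrm{e}^{-\partial_x^3\tau}\tilde W)\bigr)\bigr]\bigr\rangle,
\]
where the derivatives can be rearranged because $\mathrm{e}^{\pm\partial_x^3 t}$ commutes with $\partial_x$. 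Crucially, the time parameter $t$ has disappeared from the ``difference'' factor, which is precisely the object controlled by the first estimate in Lemma~\ref{lem:kb}.

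Applying that lemma gives
\[
|I_1|\le c\,\tau\,\|\partial_x^2 u\|_{L^2}\|\partial_x^2\tilde V\|_{L^2}\|\partial_x^2\tilde W\|_{L^2}
= c\,\tau\,\|\partial_x^2(f-g)\|_{L^2}\|\partial_x(f-g)\|_{L^2}\|\partial_x(f+g)\|_{L^2},
\]
using unitarity of $\mathrm{e}^{-\partial_x^3 t}$ in $L^2$ together with the identity $\partial_x^2\partial_x^{-1}=\partial_x$. To finish, I would estimate the last two factors: since $f-g$ and $f+g$ have zero mean, Poincaré's inequality gives $\|\partial_x(f-g)\|_{L^2}\le c\|\partial_x^2(f-g)\|_{L^2}$ and
\[
\|\partial_x(f+g)\|_{L^2}\le \|\partial_x(f-g)\|_{L^2}+2\|\partial_x g\|_{L^2}\le c\bigl(\|\partial_x^2(f-g)\|_{L^2}+\|\partial_x^3 g\|_{L^2}\bigr),
\]
yielding the claimed bound. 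The main (though modest) obstacle here is the algebraic reduction in the first step: one must recognize that, after factoring $a^2-b^2$ and pulling out $\mathrm{e}^{\partial_x^3 t}$, the phases in $I_1$ depend only on $\tau$, so that exactly one power of $\tau$ is produced by Lemma~\ref{lem:kb}; the quantitative estimate is then a direct Poincaré argument exploiting the zero-mean assumption.
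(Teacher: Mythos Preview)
Your algebraic reduction is correct: after factoring $a^2-b^2$ and shifting $\mathrm{e}^{\partial_x^3 t}$ through the inner product you indeed obtain
\[
I_1=-\bigl\langle \partial_x^2 u,\ \partial_x^2\bigl[\tilde V\tilde W-\mathrm{e}^{\partial_x^3\tau}\bigl((\mathrm{e}^{-\partial_x^3\tau}\tilde V)(\mathrm{e}^{-\partial_x^3\tau}\tilde W)\bigr)\bigr]\bigr\rangle.
\]
The gap is in the next step. Lemma~\ref{lem:kb} bounds $\langle \partial_x^2 u,\, vw-\mathrm{e}^{\partial_x^3\tau}[(\mathrm{e}^{-\partial_x^3\tau}v)(\mathrm{e}^{-\partial_x^3\tau}w)]\rangle$, i.e.\ with \emph{no} derivative on the second slot; your expression carries an additional $\partial_x^2$ on the bracket. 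That extra factor cannot be ignored. If you move it onto $u$ by integration by parts you face $\|\partial_x^4 u\|_{L^2}=\|\partial_x^4(f-g)\|_{L^2}$; if instead you expand $\partial_x^2(\tilde V\tilde W)$ by Leibniz and then apply Lemma~\ref{lem:kb} termwise, the term $(\partial_x^2\tilde V)\tilde W$ produces a factor $\|\partial_x^2(\partial_x^2\tilde V)\|_{L^2}=\|\partial_x^3(f-g)\|_{L^2}$. Either way you exceed the available regularity ($f\in H^2$ only), so the claimed bound $c\tau\|\partial_x^2 u\|_{L^2}\|\partial_x^2\tilde V\|_{L^2}\|\partial_x^2\tilde W\|_{L^2}$ does not follow.

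This derivative-budget problem is precisely what the paper's argument is designed to avoid. After expanding $\partial_x^2$ on the products, the paper uses the cancellation identities
\[
\langle vu,\partial_x u\rangle=-\tfrac12\langle \partial_x v,u^2\rangle,
\qquad
\langle u-v,(\partial_x(u-v))^2\rangle=-\tfrac12\langle (u-v)^2,\partial_x^2(u-v)\rangle
\]
to shift one derivative off the ``dangerous'' factors before forming the difference in $\tau$. Only after this redistribution do the individual terms match the hypotheses of Lemma~\ref{lem:kb}, with at most two derivatives landing on $f-g$ and the third derivative absorbed by $g$. In short, the missing ingredient in your argument is this integration-by-parts step; the factorisation $a^2-b^2=(a-b)(a+b)$ alone does not buy enough.
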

\begin{proof}
Note that for all $ t \in \mathbb{R}$ the following relation holds
\begin{equation*}
\langle \mathrm{e}^{t \partial_x^3} f, g \rangle = \langle  f, \mathrm{e}^{-t \partial_x^3} g \rangle .
\end{equation*}
Thus, by setting $\tilde{f}= \mathrm{e}^{-t\partial_x^3} f$ and $\tilde{g} = \mathrm{e}^{-t\partial_x^3} g$ we obtain that
\begin{equation*}
\begin{aligned}
 I_1 
 & =
 \langle \partial_x^2 \left( \mathrm{e}^{-\partial_{x}^3\tau } \partial_x^{-1}\tilde{f}\right)^2- \partial_x ^2\left( \mathrm{e}^{-\partial_{x}^3\tau } \partial_x^{-1}\tilde{g}\right)^2,  \mathrm{e}^{-\partial_{x}^3\tau } \partial_x^2 (\tilde f-\tilde g)\rangle\\&
 - \langle \partial_x^2 \left(\partial_x^{-1} \tilde{f}\right)^2- \partial_x^2 \left( \partial_x^{-1}\tilde{g}\right)^2, \partial_x^2 (\tilde{f}-\tilde{g})\rangle.
 \end{aligned}
 \end{equation*}
 Using the relation $f^2- g^2 = (f-g)^2 + 2(f-g) g$ as well as the chain rule yields that
 \begin{equation*}
 \begin{aligned}
 I_1  & =
 \langle \partial_x^2 \left( \mathrm{e}^{-\partial_{x}^3\tau } \partial_x^{-1}(\tilde{f}-\tilde{g})\right)^2+ 2 \partial_x^2\left( \mathrm{e}^{-\partial_{x}^3\tau } \partial_x^{-1}(\tilde{f}-\tilde{g})\right)\left( \mathrm{e}^{-\partial_{x}^3\tau } \partial_x^{-1}\tilde{g}\right) ,  \mathrm{e}^{-\partial_{x}^3\tau } \partial_x^2 (\tilde f-\tilde g)\rangle\\&
 - \langle \partial_x^2 \left(\partial_x^{-1} (\tilde{f}-\tilde{g}) \right)^2+ 2 \partial_x ^2 \left(\partial_x^{-1} (\tilde{f}-\tilde{g}) \right)\left( \partial_x^{-1}\tilde{g}\right), \partial_x ^2(\tilde{f}-\tilde{g})\rangle\\
    & =
2  \langle \left( \mathrm{e}^{-\partial_{x}^3\tau } (\tilde{f}-\tilde{g})\right)^2+\left( \mathrm{e}^{-\partial_{x}^3\tau } \partial_x^{-1}(\tilde{f}-\tilde{g})\right) \left( \mathrm{e}^{-\partial_{x}^3\tau } \partial_x (\tilde{f}-\tilde{g})\right),  \mathrm{e}^{-\partial_{x}^3\tau } \partial_x^2 (\tilde f-\tilde g)\rangle
\\
&+  2 \langle\left( \mathrm{e}^{-\partial_{x}^3\tau }\partial_x (\tilde{f}-\tilde{g})\right)\left( \mathrm{e}^{-\partial_{x}^3\tau } \partial_x^{-1}\tilde{g}\right)+ 2 \left( \mathrm{e}^{-\partial_{x}^3\tau }(\tilde{f}-\tilde{g})\right)\left( \mathrm{e}^{-\partial_{x}^3\tau } \tilde{g}\right) ,  \mathrm{e}^{-\partial_{x}^3\tau } \partial_x^2 (\tilde f-\tilde g)\rangle
\\
&+  2 \langle \left( \mathrm{e}^{-\partial_{x}^3\tau }\partial_x^{-1}(\tilde{f}-\tilde{g})\right)\left( \mathrm{e}^{-\partial_{x}^3\tau } \partial_x \tilde{g}\right) ,  \mathrm{e}^{-\partial_{x}^3\tau } \partial_x^2 (\tilde f-\tilde g)\rangle
\\
&
 - 2 \langle  \left( \partial_x (\tilde{f}-\tilde{g}) \right)  \left(\partial_x^{-1}(\tilde{f}-\tilde{g}) \right) +( \tilde{f}-\tilde{g}) ^2, \partial_x^2 (\tilde{f}-\tilde{g})\rangle
 \\ &  
 - 2 \langle   \left(\partial_x^{-1} (\tilde{f}-\tilde{g}) \right)\partial_x\tilde{g} + 2( \tilde{f}-\tilde{g} )\tilde{g}+ \left(\partial_x (\tilde{f}-\tilde{g}) \right) \left(\partial_x^{-1}\tilde{g}\right)  , \partial_x^2 (\tilde{f}-\tilde{g})\rangle  .
  \end{aligned}
 \end{equation*}
Next we use another key fact namely that
 $$
 \langle v u, \partial_x u\rangle = \frac{1}{2} \langle v, \partial_x (u)^2\rangle = - \frac{1}{2}\langle \partial_x v, u^2\rangle
 $$
as well as that
 $$
 \langle u - v, \left(\partial_x (u-v)\right)^2\rangle = - \frac{1}{2}\langle (u-v)^2, \partial_x^2 (u-v)\rangle.
 $$
 This yields that
 \begin{equation*}
 \begin{aligned}
 I_1   & =
2  \langle \left( \mathrm{e}^{-\partial_{x}^3\tau } (\tilde{f}-\tilde{g})\right)^2,  \mathrm{e}^{-\partial_{x}^3\tau } \partial_x^2 (\tilde f-\tilde g)\rangle
\\
& +\frac{1}{2} \langle \left( \mathrm{e}^{-\partial_{x}^3\tau }(\tilde{f}-\tilde{g})\right)^2 , \partial_x^2  \mathrm{e}^{-\partial_{x}^3\tau } (\tilde f-\tilde g)\rangle
\\
&- \langle \mathrm{e}^{-\partial_{x}^3\tau }\tilde{g},\left(  \mathrm{e}^{-\partial_{x}^3\tau } \partial_x (\tilde f-\tilde g)\right)^2\rangle
\\
&+4 \langle \left( \mathrm{e}^{-\partial_{x}^3\tau }(\tilde{f}-\tilde{g})\right)\left( \mathrm{e}^{-\partial_{x}^3\tau } \tilde{g}\right) ,  \mathrm{e}^{-\partial_{x}^3\tau } \partial_x^2 (\tilde f-\tilde g)\rangle
\\
&+  2 \langle \left( \mathrm{e}^{-\partial_{x}^3\tau }\partial_x^{-1}(\tilde{f}-\tilde{g})\right)\left( \mathrm{e}^{-\partial_{x}^3\tau } \partial_x \tilde{g}\right) ,  \mathrm{e}^{-\partial_{x}^3\tau } \partial_x^2 (\tilde f-\tilde g)\rangle
\\
&
 -\frac{1}{2}\langle  (\tilde{f}-\tilde{g})^2,\partial_x^2 (\tilde{f}-\tilde{g})\rangle
 \\ &  
  - 2 \langle ( \tilde{f}-\tilde{g})^2, \partial_x^2 (\tilde{f}-\tilde{g})\rangle
 \\ &  
 - 2 \langle   \left(\partial_x^{-1} (\tilde{f}-\tilde{g}) \right)\partial_x\tilde{g}  , \partial_x^2 (\tilde{f}-\tilde{g})\rangle 
  \\ &  
 - 4 \langle ( \tilde{f}-\tilde{g} )\tilde{g}, \partial_x^2 (\tilde{f}-\tilde{g})\rangle  
  \\ &  
 + \langle  \tilde{g}  , \left( \partial_x (\tilde{f}-\tilde{g})\right)^2\rangle  . 
 \end{aligned}
 \end{equation*}
 Thus, rearranging the terms leads to
   \begin{equation*}
   \begin{aligned}
 I_1 & = \frac{5}{2} \langle   \mathrm{e}^{\partial_{x}^3\tau }  \left(  \mathrm{e}^{-\partial_{x}^3\tau } (\tilde f-\tilde g)\right)^2 -(\tilde{f}-\tilde{g})^2,\partial_x^2( \tilde{f}-\tilde{g})\rangle
\\
&-  \langle  \mathrm{e}^{\partial_{x}^3\tau } \left(\mathrm{e}^{-\partial_{x}^3\tau } \partial_x (\tilde f-\tilde g)\right)^2 - \left(\partial_x (\tilde{f}-\tilde{g})\right)^2,\tilde{g}\rangle 
\\&
+ 4\langle  \mathrm{e}^{\partial_{x}^3\tau }  \left(  \mathrm{e}^{-\partial_{x}^3\tau } (\tilde{f}-\tilde{g})\right)\left( \mathrm{e}^{-\partial_{x}^3\tau } \tilde{g}\right) - (\tilde{f}-\tilde{g}) \left( \tilde{g}\right)  , \partial_x^2 (\tilde f-\tilde g)\rangle
 \\ &   
+2\langle  \mathrm{e}^{\partial_{x}^3\tau }  \left(  \mathrm{e}^{-\partial_{x}^3\tau } \partial_x^{-1}(\tilde{f}-\tilde{g})\right)\left( \mathrm{e}^{-\partial_{x}^3\tau } \partial_x \tilde{g}\right) - \left(\partial_x^{-1}(\tilde{f}-\tilde{g})\right) \left( \partial_x\tilde{g}\right)  , \partial_x^2 (\tilde f-\tilde g)\rangle.
 \end{aligned}
\end{equation*}
With the aid of Lemma \ref{lem:kb} we thus obtain that
\begin{equation*}
\vert I_1 \vert \leq  \tau c \Big(\Vert \partial_x^2 (f-g) \Vert_{L^2} + \Vert \partial_x^3 g\Vert_{L^2} \Big) \Vert \partial_x^2 (f - g) \Vert_{L^2}^2 
\end{equation*}
for some constant $c > 0$.
\end{proof}
\begin{lemma}[Bound on $I_2$] \label{lem:I2} We have
\begin{equation*}
\vert I_2 \vert  \leq  \tau  M \Vert \partial_x^2 (f-g)\Vert_{L^2}^2,
\end{equation*}
where $M$ depends on $\Vert \partial_x^2 f \Vert_{L^2}$ and $\Vert \partial_x^2 g \Vert_{L^2}$.
\end{lemma}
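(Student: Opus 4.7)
The plan is to estimate $I_2$ directly in Fourier space, exploiting once more the key relation~\eqref{eq:key} together with an interpolated bound for $|\mathrm{e}^{i\theta}-1|$ tailored to the available $H^2$ regularity. Writing $h:=f-g$ and factorizing $\hat f_{k_1}\hat f_{k_2}-\hat g_{k_1}\hat g_{k_2} = \hat f_{k_1}\hat h_{k_2}+\hat h_{k_1}\hat g_{k_2}$, a computation analogous to the one leading to~\eqref{int} shows that the $l$-th Fourier coefficient of the function whose $L^2$-norm squared equals $I_2$ is
\begin{equation*}
\hat R_l = \sum_{\substack{k_1+k_2=l\\ k_1,k_2\neq 0}}\frac{l^2}{k_1 k_2}\bigl(\mathrm{e}^{-3i(t+\tau)lk_1k_2} - \mathrm{e}^{-3itlk_1k_2}\bigr)\bigl(\hat f_{k_1}\hat h_{k_2} + \hat h_{k_1}\hat g_{k_2}\bigr);
\end{equation*}
the restriction $k_1,k_2\neq 0$ comes from Assumption~\ref{as0} (the conservation of the zero-mode). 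By Plancherel $I_2 = \sum_l|\hat R_l|^2$.

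The crucial step is to bound the exponential difference not by the naive $|\mathrm{e}^{i\theta}-1|\leq|\theta|$ but by the interpolated $|\mathrm{e}^{i\theta}-1|\leq\sqrt{2|\theta|}$, which follows from $|\sin x|\leq|x|^{1/2}$. This yields
\begin{equation*}
|\hat R_l| \leq \sqrt{6\tau}\sum_{k_1+k_2=l}\frac{|l|^{5/2}}{|k_1|^{1/2}|k_2|^{1/2}}\bigl(|\hat f_{k_1}||\hat h_{k_2}| + |\hat h_{k_1}||\hat g_{k_2}|\bigr).
\end{equation*}
Splitting $|l|^{5/2}\leq C(|k_1|^{5/2}+|k_2|^{5/2})$ expresses $|\hat R_l|$ as a sum of discrete convolutions $(a\ast b)_l$ with $a_k=|k|^2|\hat\phi_k|$ and $b_k=|k|^{-1/2}|\hat\psi_k|$ for $\phi,\psi\in\{f,g,h\}$. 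Young's inequality $\|a\ast b\|_{l^2}\leq\|a\|_{l^2}\|b\|_{l^1}$ controls $\|a\|_{l^2}=\|\partial_x^2\phi\|_{L^2}$, while Cauchy--Schwarz together with the summability $\sum_{k\neq 0}|k|^{-3}<\infty$ and Poincaré (using zero mean of $\psi$) give $\|b\|_{l^1}\leq C\|\partial_x^2\psi\|_{L^2}$. Summing the four resulting contributions and absorbing the constants into $M=M(\|\partial_x^2 f\|_{L^2},\|\partial_x^2 g\|_{L^2})$ produces the desired bound
\begin{equation*}
I_2 \leq \tau M \|\partial_x^2(f-g)\|_{L^2}^2.
\end{equation*}

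The main obstacle will be the choice of the interpolation exponent for $|\mathrm{e}^{i\theta}-1|$. The naive bound with exponent $1$ produces a weight $|l|^3$ in $|\hat R_l|$ and would force $H^3$ regularity on one of $f,g,h$, incompatible with the $H^2$ framework; the trivial bound by $2$ loses any smallness in $\tau$. The square-root interpolation makes only half of the oscillation cancellation explicit, leaving the complementary $\sqrt\tau$ to pair with the convolution machinery, so that the resulting weight $|l|^{5/2}$ splits exactly against the factors $|k_i|^{-1/2}$ and the available $H^2$ regularity. Getting this balance right is the one delicate point; once fixed, the rest of the argument is standard Young/Cauchy--Schwarz.
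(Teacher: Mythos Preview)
Your argument is correct, but it takes a genuinely different route from the paper's proof. The paper does not bound $\|R\|_{L^2}$ directly; instead it expands $I_2=\|R\|_{L^2}^2$ by polarization and regroups it as a sum of two inner products $I_2^a+I_2^b$, each pairing a ``difference'' factor of the form $\mathrm{e}^{\partial_x^3\tau}[\cdots]-[\cdots]$ against a second factor that is merely bounded in $L^2$ using $H^2$ regularity. On the difference factor the paper applies the full linear estimate $|1-\mathrm{e}^{i\theta}|\le|\theta|$, which produces a weight $|l|^3$; this weight is then split as $|l|\cdot|l|^2$ across the two sides of the inner product, so no $H^3$ control is needed. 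Your approach instead uses the interpolated estimate $|1-\mathrm{e}^{i\theta}|\le\sqrt{2|\theta|}$ once on $\hat R_l$ itself, yielding the weight $|l|^{5/2}$ and a $\sqrt\tau$ that becomes $\tau$ upon squaring; the fractional weight is absorbed by Young and the summability of $|k|^{-3}$. Your route is slightly more direct (no inner-product decomposition, a single estimate on $\hat R_l$), while the paper's route keeps integer weights throughout and parallels the treatment of $I_1$ via Lemma~\ref{lem:kb}. Both rely only on $H^2$ regularity of $f$ and $g$ and yield the same bound.
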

\begin{proof}
In the following let $M$ denote a constant depending on $\Vert \partial_x^2 f \Vert_{L^2}$ and $\Vert \partial_x^2 g \Vert_{L^2}$. Setting $\tilde{f}= \mathrm{e}^{-t \partial_x^3} f$ and $\tilde{g}= \mathrm{e}^{-t \partial_x^3} g$ yields that
\begin{equation}\label{I2}
\begin{aligned}
& I_2 = \langle  
  \partial_x^2   \left( \mathrm{e}^{-\partial_{x}^3\tau } \partial_x^{-1}\tilde{f}\right)^2- \partial_x^2\left( \mathrm{e}^{-\partial_{x}^3\tau } \partial_x^{-1}\tilde{g}\right)^2,
   \partial_x^2   \left( \mathrm{e}^{-\partial_{x}^3 \tau } \partial_x^{-1}\tilde{f}\right)^2-\partial_x^2 \left( \mathrm{e}^{-\partial_{x}^3\tau} \partial_x^{-1}\tilde{g}\right)^2 \rangle
 \\&
 - 2 \langle \partial_x^2   \mathrm{e}^{\partial_{x}^3 \tau} \Big[ \left( \mathrm{e}^{-\partial_{x}^3 \tau} \partial_x^{-1} \tilde{f}\right)^2- \left( \mathrm{e}^{-\partial_{x}^3\tau} \partial_x^{-1}\tilde{g}\right)^2 \Big],
 \partial_x^2 \left(\partial_x^{-1}\tilde{f}\right)^2-\partial_x^2 \left(  \partial_x^{-1}\tilde{g}\right)^2
 \rangle
 \\
&
+ \langle \partial_x^2  \left( \partial_x^{-1} \tilde{f} \right)^2- \partial_x^2\left( \partial_x^{-1}\tilde{g}\right)^2
, \partial_x ^2\left(\partial_x^{-1}\tilde{f}\right)^2- \partial_x ^2 \left(\partial_x^{-1}\tilde{g}\right)^2
\rangle\\
& = I_2^a + I_2^b
 \end{aligned}
 \end{equation}
 with
 \begin{equation*}
 \begin{aligned}
 I_2^a  =& \langle  
  \partial_x^2   \left( \mathrm{e}^{-\partial_{x}^3\tau } \partial_x^{-1}\tilde{f}\right)^2- \partial_x^2\left( \mathrm{e}^{-\partial_{x}^3\tau } \partial_x^{-1}\tilde{g}\right)^2- \partial_x^2 \mathrm{e}^{-\partial_{x}^3 \tau} \Big[ \left(\partial_x^{-1}\tilde{f}\right)^2- \left(  \partial_x^{-1}\tilde{g}\right)^2\Big]
,\\&\quad
   \partial_x^2   \left( \mathrm{e}^{-\partial_{x}^3 \tau } \partial_x^{-1}\tilde{f}\right)^2-\partial_x^2 \left( \mathrm{e}^{-\partial_{x}^3\tau} \partial_x^{-1}\tilde{g}\right)^2 \rangle\\
  I_2^b  =&  -  \langle \partial_x^2   \mathrm{e}^{\partial_{x}^3 \tau} \Big[ \left( \mathrm{e}^{-\partial_{x}^3 \tau} \partial_x^{-1} \tilde{f}\right)^2- \left( \mathrm{e}^{-\partial_{x}^3\tau} \partial_x^{-1}\tilde{g}\right)^2 \Big] - \partial_x^2  \left( \partial_x^{-1} \tilde{f} \right)^2- \partial_x^2\left( \partial_x^{-1}\tilde{g}\right)^2,\\&
\quad \partial_x^2 \left(\partial_x^{-1}\tilde{f}\right)^2- \partial_x^2 \left(  \partial_x^{-1}\tilde{g}\right)^2
 \rangle.
 \end{aligned}
 \end{equation*}
 Similarly to Lemma \ref{lem:kb} we obtain with
 $F: = \partial_x^2 \left(\partial_x^{-1}\tilde{f}\right)^2- \partial_x^2  \left(\partial_x^{-1}\tilde{g}\right)^2$  by the key relation \eqref{eq:key} using the Cauchy-Schwarz and Young inequality that
 \begin{equation}\label{I2b}
 \begin{aligned}
  \vert I_2^b \vert & = \vert \sum_{k_1,k_2} \hat{F}_{-(k_1+k_2)} \frac{(k_1+k_2)^2 }{k_1 k_2} \left(1 - \mathrm{e}^{-i \tau \big((k_1+k_2)^3 - k_1^3-k_2^3\big)} \right)\left( \hat{\tilde{f}}_{k_1} \hat{\tilde{f}}_{k_2} - \hat{\tilde{g}}_{k_1} \hat{\tilde{g}}_{k_2}\right)\vert \\
 &  = \vert \sum_{k_1,k_2}\hat{F}_{-(k_1+k_2)} \frac{(k_1+k_2)^2 }{k_1 k_2} \left(1 - \mathrm{e}^{-i \tau 3 k_1k_2 (k_1+k_2)} \right)\left( \hat{\tilde{f}}_{k_1} \hat{\tilde{f}}_{k_2} - \hat{\tilde{g}}_{k_1} \hat{\tilde{g}}_{k_2}\right)\vert \\
 & \leq 3\tau   \sum_{k_1,k_2} \vert \hat{F}_{-(k_1+k_2)}(k_1+k_2)\vert  \vert (k_1+k_2)^2\left( \hat{\tilde{f}}_{k_1} \hat{\tilde{f}}_{k_2} - \hat{\tilde{g}}_{k_1} \hat{\tilde{g}}_{k_2}\right)\vert\\
 & \leq 3 \tau \sum_{k,l} \vert l \hat{F}_{-l}\vert \big( (l-k)^2 +2 |(l-k)k|  + k^2\big)\big \vert (\tilde{f}_{k} - \tilde{g}_k) \tilde{f}_{l-k} + \tilde{g}_k (\tilde{f}_{l-k} - \tilde{g}_{l-k})\big\vert\\
 & \leq 6 \tau \Vert \partial_x F\Vert_{L^2} \big(\sum_{j=0,1}\Vert (\tilde f-\tilde g)^{(2j)} \ast \tilde g^{(2-2j))}\Vert_{l^2} + \Vert (\tilde f-\tilde g)^{(2j)} \ast \tilde f^{(2-2j)}\Vert_{l^2}\big)\\
 & +  6 \tau \Vert \partial_x F\Vert_{L^2}\big( \Vert (\tilde f-\tilde g)^{(1)} \ast \tilde f^{(1)}\Vert_{l^2} + \Vert (\tilde f-\tilde g)^{(1)} \ast \tilde g^{(1)}\Vert_{l^2}\big)\\
 & \leq c \tau \Vert \partial_x F\Vert_{L^2} \Vert \partial_x^2 (f - g) \Vert_{L^2} (\Vert \partial_x^2 f \Vert_{L^2}+ \Vert \partial_x^2 g \Vert_{L^2}\big)\\
 & \leq  \tau M \Vert \partial_x^2 (f-g)\Vert_{L^2}^2,
  \end{aligned}
 \end{equation}
 where again we used the notation $\Phi^{(j)}(k) := |k|^j \vert \hat{\Phi}_k\vert$. Similarly, we obtain for $I_2^a$ with $F: =  \partial_x^2   \left( \mathrm{e}^{-\partial_{x}^3\tau } \partial_x^{-1}\tilde{f}\right)^2- \partial_x^2\left( \mathrm{e}^{-\partial_{x}^3\tau } \partial_x^{-1}\tilde{g}\right)^2$ that
 \begin{equation}\label{I2a}
 \begin{aligned}
 \vert I_2^a\vert 
 & = \vert \sum_{k_1,k_2}\hat{F}_{-(k_1+k_2)} \frac{(k_1+k_2)^2}{k_1k_2}\left( \mathrm{e}^{i \tau (k_1^3 + k_2^3)} - \mathrm{e}^{i \tau (k_1+k_2)^3}\right)\left( \hat{\tilde{f}}_{k_1} \hat{\tilde{f}}_{k_2} - \hat{\tilde{g}}_{k_1} \hat{\tilde{g}}_{k_2}\right)\vert\\
    & \leq \sum_{k_1,k_2} \vert\hat{F}_{-(k_1+k_2)} \frac{(k_1+k_2)^2}{k_1k_2} \vert \left \vert 1- \mathrm{e}^{-i \tau 3k_1k_2(k_1+k_2)}\right\vert \vert \left( \hat{\tilde{f}}_{k_1} \hat{\tilde{f}}_{k_2} - \hat{\tilde{g}}_{k_1} \hat{\tilde{g}}_{k_2}\right)\vert
     \\
 & \leq  \tau  M\Vert \partial_x^2 (f - g) \Vert_{L^2}^2.
 \end{aligned}
 \end{equation}
 Plugging the bounds \eqref{I2b} and \eqref{I2a} into \eqref{I2} yields the assertion.
\end{proof}
\subsubsection{Local error analysis}\label{sec:lerr}
Let $\phi^t$ denote the exact flow associated to the reformulated KdV equation~\eqref{kdvT}, i.e., $v(t) = \phi^t(v(0))$. The following local error bound holds for the exponential-type integrator $\Phi^\tau$ defined in \eqref{phiS} with $v^{k+1}= \Phi_{t_k}^\tau(v^k)$.
\begin{lemma}\label{lem:loc}
Let $v(t_k+t) = \phi^t(v(t_k)) \in H^3$ for $0 \leq t \leq \tau$. Then
\begin{equation*}
\Vert \partial_x^2 \big (\phi^\tau(v(t_k)) - \Phi^\tau_{t_k}(v(t_k) )\big)\Vert_{L^2} \leq c \tau^{3/2},
\end{equation*}
where $c$ depends on $\mathrm{sup}_{0\leq t \leq \tau}\Vert \phi^t(v(t_k)) \Vert_{H^3}$.
\end{lemma}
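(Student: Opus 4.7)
The plan is to represent the local error as an explicit Duhamel integral and to exploit the time-regularity of $v$ in intermediate Sobolev spaces.

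Comparing the exact Duhamel formula~\eqref{To} with the scheme~\eqref{phiS} and invoking the explicit evaluation carried out in~\eqref{int} — which shows that $\Phi^\tau_{t_k}(v(t_k))$ equals the exact integration of the Duhamel integrand with $v(t_k+s)$ replaced by $v(t_k)$ — I would write
\[
E := \phi^\tau(v(t_k)) - \Phi^\tau_{t_k}(v(t_k)) = \frac{1}{2}\int_0^\tau \mathrm{e}^{(t_k+s)\partial_x^3}\partial_x\bigl[a(s)^2 - b(s)^2\bigr]\,\mathrm{d} s,
\]
with $a(s) := \mathrm{e}^{-(t_k+s)\partial_x^3} v(t_k+s)$ and $b(s) := \mathrm{e}^{-(t_k+s)\partial_x^3}v(t_k)$. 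Since $\mathrm{e}^{t\partial_x^3}$ is unitary in every Sobolev norm and commutes with $\partial_x^2$, Minkowski's inequality reduces matters to proving the pointwise bound $\|\partial_x^3[a(s)^2 - b(s)^2]\|_{L^2} \leq c\,s^{1/2}$, which integrated in $s$ delivers the desired $\tau^{3/2}$-estimate.

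The crucial input is the time-regularity of $v$: the equation $\partial_t v = \tfrac{1}{2}\mathrm{e}^{t\partial_x^3}\partial_x(\mathrm{e}^{-t\partial_x^3}v)^2$ together with the Sobolev algebra property of $H^3$ gives $\|\partial_t v\|_{H^2} \leq c\sup_{0\leq t\leq \tau}\|v(t_k+t)\|_{H^3}^2$, so that $\|v(t_k+s) - v(t_k)\|_{H^2} \leq c\,s$. Combined with the trivial uniform bound in $H^3$, interpolation between $H^2$ and $H^3$ yields the refined estimate $\|v(t_k+s) - v(t_k)\|_{H^{5/2}} \leq c\,s^{1/2}$ — precisely the source of the extra $\tau^{1/2}$ factor. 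Factoring $a^2 - b^2 = (a-b)(a+b)$ with $a-b = \mathrm{e}^{-(t_k+s)\partial_x^3}(v(t_k+s) - v(t_k))$ and expanding $\partial_x^3$ via Leibniz, most resulting terms involve $\|a-b\|_{H^2}$ or $\|a-b\|_{L^\infty}$ and are therefore $O(s)$ thanks to the $H^2$-bound above. The genuinely problematic piece is $(\partial_x^3(a-b))(a+b)$, whose naive estimate is merely uniform in $s$; it is handled by integration by parts in $x$ to transfer one derivative onto the smoother factor $a+b$ and by combining the resulting lower-order commutator expressions with the $H^{5/2}$-bound on $a-b$, thereby extracting the missing $s^{1/2}$ factor.

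The main obstacle is exactly this derivative-loss term: since $v \in H^3$ (not $H^4$), the norm $\|a-b\|_{H^3}$ carries no time decay, and only the $H^{5/2}$-interpolation between the favorable $H^2$-bound and the neutral $H^3$-bound can compensate the half-derivative deficit. This is what pins down the local error exponent at $3/2$ rather than the formal $2$ one would expect of a smooth first-order method, and it is ultimately what forces the subsequent global $H^2$-analysis to be of order $\tau^{1/2}$ rather than $\tau$.
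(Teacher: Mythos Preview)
Your approach differs from the paper's in an essential way, and the step you flag as ``handled by integration by parts in $x$'' does not go through.

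The paper does not bound $\|\partial_x^3[a(s)^2-b(s)^2]\|_{L^2}$ pointwise in $s$. Instead it first estimates the $H^2$ error crudely by $\tau\sup_{0\le t\le\tau}\|v(t_k+t)-v(t_k)\|_{H^3}$ (using only the $H^3$-algebra property), and then proves the nontrivial bound
\[
\|v(t_k+t)-v(t_k)\|_{H^3}\le c\,t^{1/2}
\]
for solutions merely in $H^3$. This is a genuine dispersive gain: the Duhamel integrand in Fourier carries the phase $\mathrm{e}^{-3i(t_k+s)k_1k_2(k_1+k_2)}$, and integration by parts \emph{in the time variable $s$} against this phase produces a factor $(k_1k_2)^{-1}$ that absorbs the derivative loss, together with a boundary term whose oscillatory factor $\mathrm{e}^{-3itk_1k_2(k_1+k_2)}-1$ is estimated by $c\,(t|k_1k_2(k_1+k_2)|)^{1/2}$. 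The half power of $t$ comes from this oscillation and the key relation~\eqref{eq:key}, not from Sobolev interpolation.

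Your route via $\|v(t_k+s)-v(t_k)\|_{H^{5/2}}\le c\,s^{1/2}$ and Leibniz is blocked precisely at the term $(\partial_x^3(a-b))(a+b)$. Once you have committed (via Minkowski) to a pointwise-in-$s$ bound on an $L^2$-norm, there is no integration by parts available: you are estimating a norm, not a pairing, and an identity such as $(\partial_x^3 w)z=\partial_x[(\partial_x^2 w)z]-(\partial_x^2 w)(\partial_x z)$ only trades one bad term for another of the same order. What you would actually need is a product estimate of the form $\|wz\|_{H^3}\le c\|w\|_{H^{5/2}}\|z\|_{H^3}$, and this is false in one dimension: the product of an $H^{5/2}$ function with an $H^3$ function lies only in $H^{5/2}$. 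The missing $s^{1/2}$ is a normal-form effect tied to the resonance identity~\eqref{eq:key} and cannot be recovered by interpolation alone.
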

\begin{proof}
As $\mathrm{e}^{t\partial_x^3 }$ is a linear isometry in $H^r$ for all $t\in \mathbb{R}$ the iteration of Duhamel's formula \eqref{kdvT} yields that
\begin{equation}\label{i1}
\begin{aligned}
\Vert \phi^\tau(v(t_k)) - \Phi^\tau_{t_k}(v(t_k)) \Vert_{H^2} & \leq \int_0^\tau \Vert \left(\mathrm{e}^{-\partial_x^3 (t_k+s)} \phi^s(v(t_k))\right)^2 - \left(\mathrm{e}^{-\partial_x^3 (t_k+s)} v(t_k)\right)^2\Vert_{H^3}\mathrm{d}s\\
& \leq \tau c_1 \mathrm{sup}_{0 \leq t \leq \tau} \Vert \phi^t(v(t_k)) - v(t_k)\Vert_{H^3},
\end{aligned}
\end{equation}
where $c_1$ depends on $ \mathrm{sup}_{0 \leq t \leq \tau} \Vert \phi^t(v(t_k))\Vert_{H^3}$. Duhamel's formula \eqref{kdvT} and integration by parts furthermore yields that
\begin{equation}\label{i2}
\begin{aligned}
& \Vert \phi^t(v(t_j)) - v(t_j)\Vert_{H^3}  \leq \Vert \int_0^t \mathrm{e}^{(t_j+s)\partial_x^3} \partial_x \left( \mathrm{e}^{-\partial_x^3 (t_j+s)} v(t_j+s)\right)^2 \mathrm{d}s \Vert_{H^3}\\
& \leq \Vert \sum_{k_1,k_2} \frac{1}{k_1 k_2} \mathrm{e}^{- 3 i  t_j k_1k_2 (k_1+k_2)}\left( \mathrm{e}^{- 3 i  t k_1k_2 (k_1+k_2)}-1\right)  \hat{v}_{k_1}(t_j+t) \hat{v}_{k_2}(t_j+t)\mathrm{e}^{i (k_1+ k_2) x}\Vert_{H^3}\\
& + \Vert \sum_{k_1,k_2} \frac{1}{k_1 k_2}\mathrm{e}^{- 3 i  t_j k_1k_2 (k_1+k_2)} \big( \hat{v}_{k_1}(t_j+t) \hat{v}_{k_2}(t_j+t) -  \hat{v}_{k_1}(t_j) \hat{v}_{k_2}(t_j)\big) \mathrm{e}^{i (k_1+ k_2) x}\Vert_{H^3}\\
&+ \Vert \int_0^t \sum_{k_1,k_2} \mathrm{e}^{- 3 i  (t_j+s) k_1k_2 (k_1+k_2)} \frac{1}{k_1k_2} \frac{\mathrm{d}}{\mathrm{d}s}\big( \hat{v}_{k_1}(t_j+s) \hat{v}_{k_2}(t_k+s)\big) \mathrm{d}s \mathrm{e}^{i (k_1+ k_2) x}\Vert_{H^3}
\\
& \leq c t^{1/2} \sup_{k_1,k_2 \in \mathbb{Z}_{\neq 0}} \frac{\vert k_1+k_2\vert^{1/2}}{|k_1 k_2|^{1/2}} \Vert v(t_j+t)\Vert_{H^3}^2 +  c t  \sup_{k_1,k_2 \in \mathbb{Z}_{\neq 0}} \frac{|k_1\vert +\vert k_2|}{|k_1k_2|} \sup_{0 \leq s \leq t} \Vert v (t_j+s)\Vert_{H^3}^3.
\end{aligned}
\end{equation}
Plugging \eqref{i2} into \eqref{i1} yields the assertion.
\end{proof}
\subsubsection{Global error bound}\label{sec:gb}
The stability analysis in Section \ref{sec:stab} and local error analysis in Section \ref{sec:lerr} allows us  the following global error bound in $H^2$.
\begin{theorem}\label{thm:main0}
Let the solution of \eqref{kdvT} satisfy $v(t) \in H^3$ for $t \leq T$. Then there exists a $\tau_0 > 0$ such that for all $\tau \leq \tau_0$ and $t_n \leq T$ we have
\begin{equation*}
\Vert v(t_n) - v^n \Vert_{H^2} \leq c \tau^{1/2},
\end{equation*}
where $c$ depends on $\mathrm{sup}_{0 \leq t \leq t_n} \Vert v(t)\Vert_{H^3}$ and $t_n$, but can be chosen independently of $\tau$.
\end{theorem}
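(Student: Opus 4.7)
The plan is a standard Lady Windermere's fan argument, with the twist that the $H^2$ a priori bound on the numerical iterates has to be secured by bootstrap, since the stability constant in Lemma \ref{lem:stab} depends on $\|\partial_x^2 v^n\|_{L^2}$.

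First I would decompose the error at step $n+1$ by inserting $\Phi^\tau_{t_n}(v(t_n))$:
\begin{equation*}
v(t_{n+1}) - v^{n+1} = \bigl[\phi^\tau(v(t_n)) - \Phi^\tau_{t_n}(v(t_n))\bigr] + \bigl[\Phi^\tau_{t_n}(v(t_n)) - \Phi^\tau_{t_n}(v^n)\bigr].
\end{equation*}
Applying the local error bound of Lemma \ref{lem:loc} to the first bracket and the stability bound of Lemma \ref{lem:stab} to the second (with $f = v^n$, $g = v(t_n)$) yields the one-step recursion
\begin{equation*}
\|\partial_x^2(v(t_{n+1}) - v^{n+1})\|_{L^2} \leq \mathrm{e}^{\tau L_n}\|\partial_x^2(v(t_n) - v^n)\|_{L^2} + c\tau^{3/2},
\end{equation*}
where $c$ depends only on $\sup_{t\leq T}\|v(t)\|_{H^3}$, while $L_n$ depends on $\|\partial_x^2 v^n\|_{L^2}$ and $\|\partial_x^3 v(t_n)\|_{L^2}$.

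To close the loop I would run an induction on $n$ together with the auxiliary hypothesis $\|\partial_x^2 v^n\|_{L^2} \leq \|\partial_x^2 v(t_n)\|_{L^2} + 1$. Under this hypothesis $L_n$ is majorised by a constant $M$ depending only on $\sup_{t\leq T}\|v(t)\|_{H^3}$, so iterating the above recursion and summing the geometric series gives
\begin{equation*}
\|\partial_x^2(v(t_n) - v^n)\|_{L^2} \leq c\,\mathrm{e}^{t_n M}\,\tau^{1/2}.
\end{equation*}
Choosing $\tau_0$ small enough that $c\,\mathrm{e}^{TM}\tau_0^{1/2} \leq 1$, this bound preserves the auxiliary hypothesis at step $n+1$, so the bootstrap closes. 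To pass from the seminorm $\|\partial_x^2\,\cdot\,\|_{L^2}$ to the full $H^2$ norm I would invoke Assumption \ref{as0} and Remark \ref{R:ZM-scheme}: both $v(t_n)$ and $v^n$ have zero mean, hence so does their difference, and Poincar\'e's inequality upgrades the seminorm estimate to the stated $H^2$ bound.

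The main obstacle is precisely this circularity between the stability estimate and the required a priori regularity of the numerical iterates: Lemma \ref{lem:stab} only produces a useful Lipschitz constant once we know $v^n$ is bounded in $H^2$, but this boundedness is itself part of what convergence would give us. The bootstrap above resolves this in the standard way, at the price of the smallness condition on $\tau_0$. The exponent $1/2$ (rather than $1$) in the conclusion is inherited from Lemma \ref{lem:loc}, where one factor of $t^{1/2}$ is lost when estimating $\sup_{k_1,k_2\neq 0} |k_1+k_2|^{1/2}/|k_1k_2|^{1/2}$ in bounding $\|\phi^t(v(t_j)) - v(t_j)\|_{H^3}$; this loss of half a power of $\tau$ is the reason the theorem only yields order $\tau^{1/2}$ in $H^2$ and has to be upgraded to genuine first order later on in $H^1$ via Theorem \ref{thm:main}.
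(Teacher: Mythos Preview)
Your proposal is correct and follows essentially the same approach as the paper: both use a Lady Windermere's fan decomposition combined with Lemma~\ref{lem:stab} (applied with $f=v^n$, $g=v(t_n)\in H^3$) and Lemma~\ref{lem:loc}, together with a bootstrap to control the dependence of the stability constant on $\|\partial_x^2 v^n\|_{L^2}$. Your write-up is in fact more explicit than the paper's about the bootstrap hypothesis and about passing from the $\|\partial_x^2\cdot\|_{L^2}$ seminorm to the full $H^2$ norm via the zero-mean property, which the paper only sketches.
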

\begin{proof}
The triangular inequality yields that
\begin{equation}\label{inq}
\begin{aligned}
\Vert v(t_{k+1}) - v^{k+1}\Vert_{H^2} & = \Vert \phi^\tau(v(t_k) )- \Phi^\tau_{t_k}(v^k)\Vert_{H^2} \\
& \leq \Vert  \phi^\tau(v(t_k)) - \Phi^\tau_{t_k}(v(t_k)) \Vert_{H^2} + \Vert \Phi^\tau_{t_k}(v(t_k)) - \Phi^\tau_{t_k}(v^k)  \Vert_{H^2}.
\end{aligned}
\end{equation}
Thus, iterating the estimate \eqref{inq} we obtain with the aid of Lemma \ref{lem:stab} (with $g = v(t_k) \in H^3$) and Lemma \ref{lem:loc} that as long as $ v^{k} \in H^2$ (for $0\leq k \leq n$) we have that
\begin{equation*}\label{inq2}
\begin{aligned}
\Vert v(t_{n+1}) - v^{n+1}\Vert_{H^2} & \leq c \tau^{3/2} + \mathrm{e}^{\tau L} \Vert v(t_n)-v^n\Vert_{H^2}  \leq c \tau^{3/2} + \mathrm{e}^{\tau L} \left( c \tau^{3/2} + \mathrm{e}^{\tau L}\Vert v(t_{n-1})-v^{n-1}\Vert_{H^2}\right) \\& \leq c \tau^{3/2}\sum_{k=0}^n \mathrm{e}^{t_k L} \leq c \tau^{1/2}  t_n \mathrm{e}^{t_n L},
\end{aligned}
\end{equation*}
where $c$ depends on $\mathrm{sup}_{0 \leq t \leq t_{n+1}} \Vert v(t)\Vert_{H^3}$, $L$ depends on $\mathrm{sup}_{0 \leq k \leq n} \Vert v(t_k)\Vert_{H^3}$ as well as on $\sup_{0 \leq k \leq n} \Vert v^{k} \Vert_{H^2}$ and we have used the fact that $\hat{v}_0(t_n) \equiv \hat{v}_0^n$. The assertion then follows by a bootstrap, respectively, ``Lady Windermere's fan" argument, see, for example \cite{Faou12,HNW93,HLR12,Lubich08}.
\end{proof}

\subsection{Error analysis in $H^1$}\label{sec:gbH1} 
The error analysis  in $H^2$ of the numerical scheme \eqref{scheme} given in Section \ref{sec:h2} yields a priori bounds on the numerical solution in $H^2$ for solutions in $H^3$. This allows us to derive the following first-order convergence bound in $H^1$.
\begin{theorem}\label{thm:main}
Let the solution of \eqref{kdvT} satisfy $v(t) \in H^3$ for $t \leq T$. Then there exists a $\tau_0 > 0$ such that for all $\tau \leq \tau_0$ and $t_n \leq T$ we have
\begin{equation*}
\Vert v(t_n) - v^n \Vert_{H^1} \leq c \tau,
\end{equation*}
where $c$ depends on $\mathrm{sup}_{0 \leq t \leq t_n} \Vert v(t)\Vert_{H^3}$ and $t_n$, but can be chosen independently of $\tau$.
\end{theorem}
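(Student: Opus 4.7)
The overall strategy mirrors Theorem~\ref{thm:main0} one derivative lower, exploiting the fact that Theorem~\ref{thm:main0} itself furnishes an a priori bound on the numerical solution in $H^2$: for $\tau \leq \tau_0$ small enough,
\begin{equation*}
\|v^k\|_{H^2} \leq \|v(t_k)\|_{H^2} + \|v(t_k) - v^k\|_{H^2} \leq \sup_{0 \leq t \leq t_n}\|v(t)\|_{H^2} + c\tau_0^{1/2} =: K
\end{equation*}
uniformly in $\tau$ and $0 \leq k\tau \leq t_n$. This is the variant of \cite[Lemma 3.1]{HLR12} alluded to in the introduction, and reduces the proof to an $H^1$ stability estimate plus an $H^1$ local error of order $\tau^2$, combined via Lady Windermere's fan as in Theorem~\ref{thm:main0}.

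The stability step asserts that whenever $\|f\|_{H^2}, \|g\|_{H^2} \leq K$,
\begin{equation*}
\|\partial_x(\Phi^\tau_t(f) - \Phi^\tau_t(g))\|_{L^2} \leq \mathrm{e}^{\tau L}\|\partial_x(f-g)\|_{L^2}
\end{equation*}
with $L = L(K)$. The decomposition of $\|\partial_x(\Phi^\tau_t(f) - \Phi^\tau_t(g))\|_{L^2}^2$ is formally identical to the one in Lemma~\ref{lem:stab}, producing a cross term $J_1$ and a quadratic term $J_2$. Both are controlled by an $H^1$-analogue of Lemma~\ref{lem:kb}: for $u \in H^1$ and $v,w \in H^2$,
\begin{equation*}
|\langle \partial_x u, vw - \mathrm{e}^{\partial_x^3\tau}[(\mathrm{e}^{-\partial_x^3\tau}v)(\mathrm{e}^{-\partial_x^3\tau}w)]\rangle| \leq c\tau \|\partial_x u\|_{L^2}\|\partial_x^2 v\|_{L^2}\|\partial_x^2 w\|_{L^2},
\end{equation*}
together with its companion for $(\partial_x v)^2$. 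This follows exactly as in \eqref{a1}--\eqref{a2}: the phase factor yields $\tau|3 k_1 k_2 (k_1+k_2)|$, one factor of $(k_1+k_2)$ is absorbed by $\partial_x u$, and the remaining $(k_1+k_2) k_1 k_2$ is distributed via $k_1+k_2 = k_1+k_2$, leaving only $H^2$-norms of $v$ and $w$ after the Young/Cauchy--Schwarz step. Writing $f^2 - g^2 = (f-g)^2 + 2(f-g)g$, the chain rule and the integration-by-parts rearrangements from Lemma~\ref{lem:I1} (in particular $\langle vu, \partial_x u\rangle = -\tfrac12\langle \partial_x v, u^2\rangle$ and $\langle u, (\partial_x u)^2\rangle = -\tfrac12\langle u^2, \partial_x^2 u\rangle$) then recast every summand in $J_1$ and $J_2$ into the form treatable by these inequalities, yielding $|J_1|+|J_2| \leq \tau L \|\partial_x(f-g)\|_{L^2}^2$.

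For the local error, since $\mathrm{e}^{\cdot\partial_x^3}$ is an $H^r$-isometry, subtracting \eqref{phiS} from the exact iterated Duhamel formula \eqref{kdvT} yields
\begin{equation*}
\|\phi^\tau(v(t_k)) - \Phi^\tau_{t_k}(v(t_k))\|_{H^1} \leq \tfrac12\int_0^\tau \|(\mathrm{e}^{-\partial_x^3(t_k+s)}\phi^s v(t_k))^2 - (\mathrm{e}^{-\partial_x^3(t_k+s)}v(t_k))^2\|_{H^2}\,\mathrm{d}s.
\end{equation*}
Since $H^2(\mathbb{T})$ is an algebra, the integrand is bounded by $c\|\phi^s v(t_k) - v(t_k)\|_{H^2}\sup_t\|v(t)\|_{H^2}$; writing $\phi^s v(t_k) - v(t_k) = \int_0^s v'(t_k+\sigma)\,\mathrm{d}\sigma$ with $v' = \tfrac12 \mathrm{e}^{t\partial_x^3}\partial_x(\mathrm{e}^{-t\partial_x^3}v)^2$, so that $\|v'\|_{H^2}\leq c\|v\|_{H^3}^2$, gives $\|\phi^s v - v\|_{H^2}\leq c\,s\,\sup_t\|v(t)\|_{H^3}^2$, and the $s$-integral is $\mathcal{O}(\tau^2)$.

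Combining both steps via $\|v(t_{k+1})-v^{k+1}\|_{H^1} \leq \|\phi^\tau v(t_k) - \Phi^\tau_{t_k}v(t_k)\|_{H^1} + \|\Phi^\tau_{t_k}v(t_k) - \Phi^\tau_{t_k}v^k\|_{H^1}$ and iterating with the bootstrap/Lady Windermere's fan argument already used in Theorem~\ref{thm:main0} gives $\|v(t_n) - v^n\|_{H^1} \leq c\tau^2\sum_{k=0}^{n-1}\mathrm{e}^{t_k L} \leq c\, t_n\mathrm{e}^{t_n L}\tau$. The main obstacle is the stability step: the integration-by-parts rearrangements must be choreographed so that every resulting term falls under the new $H^1$-variant of Lemma~\ref{lem:kb} with only $H^2$-norms of $f$ and $g$ on the right-hand side --- the $\|\partial_x^3 g\|_{L^2}$ term that appears harmlessly in the $H^2$-setting of Lemma~\ref{lem:stab} would break the bootstrap here and must be avoided.
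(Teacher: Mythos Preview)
Your overall strategy --- the a priori $H^2$-bound on $v^k$ from Theorem~\ref{thm:main0}, an $\mathcal{O}(\tau^2)$ local error in $H^1$, an $H^1$ stability estimate, and Lady Windermere's fan --- is exactly the paper's. The local-error argument matches the paper's \eqref{loc10} line for line.

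Two points about the stability step deserve comment. First, your stated bilinear inequality $|\langle\partial_x u, vw-\mathrm{e}^{\partial_x^3\tau}[\cdots]\rangle|\leq c\tau\|\partial_x u\|_{L^2}\|\partial_x^2 v\|_{L^2}\|\partial_x^2 w\|_{L^2}$ is correct, but on its own it does not deliver two powers of $\|\partial_x(f-g)\|_{L^2}$ for every summand; e.g.\ after the integration-by-parts rearrangement the term $\langle\tilde g,(\tilde f-\tilde g)^2-\mathrm{e}^{\partial_x^3\tau}(\cdots)^2\rangle$ picks up only $\|\partial_x^2(\tilde f-\tilde g)\|_{L^2}^2$ from your estimate. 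The paper handles this with the sharper pair \eqref{eb1}--\eqref{eb2}: \eqref{eb1} carries the $\min(\|\partial_x^2 v\|\|\partial_x w\|,\|\partial_x v\|\|\partial_x^2 w\|)$ that lets one keep a single derivative on the $h$-factor, and \eqref{eb2} reads $|\langle u,v^2-\cdots\rangle|\leq c\tau\|\partial_x^2 u\|_{L^2}\|\partial_x v\|_{L^2}^2$, which is what produces $\|\partial_x(\tilde f-\tilde g)\|_{L^2}^2$ in the term above. Your ``companion'' needs to be precisely this, not a naive one-derivative shift of the second estimate in Lemma~\ref{lem:kb}. Second, your closing caveat is slightly off: the paper's stability constant in \eqref{stab10} is stated as $L=L(\|\partial_x^2 f\|_{L^2},\|\partial_x^3 g\|_{L^2})$, and this is harmless because $g=v(t_n)\in H^3$ is the \emph{exact} solution --- only a dependence on $\|\partial_x^3 f\|_{L^2}$ with $f=v^n$ would break the bootstrap. (The actual bounds \eqref{boundI12}--\eqref{boundI210} happen to need only $H^2$, but that is not essential.)
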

\begin{proof}
Note that Duhamel's formula \eqref{kdvT} implies the first-order consistency bound
\begin{equation}\label{loc10}
\begin{aligned}
\Vert \partial_x \big(\phi^\tau(v(t_k)) - \Phi^\tau_{t_k}(v(t_k)) \big)\Vert_{L^2} & \leq \int_0^\tau \Vert \partial_{x}^2 \Big[\left(\mathrm{e}^{-\partial_x^3 (t_k+s)} \phi^s(v(t_k))\right)^2 - \left(\mathrm{e}^{-\partial_x^2 (t_k+s)} v(t_k)\right)^2\Big] \Vert_{L^2}\mathrm{d}s\\
& \leq \tau c_1 \mathrm{sup}_{0 \leq t \leq \tau} \Vert \phi^t(v(t_k)) - v(t_k)\Vert_{H^2}\\
& \leq \tau^{2} c_1 \mathrm{sup}_{0\leq t \leq \tau} \Vert \phi^t(v(t_k))\Vert_{H^3},
\end{aligned}
\end{equation}
where $c_1$ depends on $ \mathrm{sup}_{0 \leq t \leq \tau} \Vert \phi^t(v(t_k))\Vert_{H^2}$.

Furthermore, as $v(t) \in H^3$ for $t \leq T$ we have the boundedness of the numerical solution in $H^2$ a priori  thanks to Theorem \ref{thm:main0}, i.e., there exists a $\tau_0>0$ such that for all $\tau \leq \tau_0$ $v^n\in H^2 $ as long as $t_n \leq T$. In particular a stability estimate of type
\begin{equation}\label{stab10}
\Vert \partial_x \big(\Phi_t^\tau(f) - \Phi_t^\tau(g)\big)\Vert_{L^2} \leq \mathrm{exp}(\tau L) \Vert \partial_x (f-g)\Vert_{L^2},\quad L = L(\Vert \partial_x^2 f \Vert_{L^2},\Vert \partial_x^3 g\Vert_{L^2})
\end{equation}
is therefore sufficient for our bootstrapping argument in $H^1$ by choosing $f= v^n \in H^2 $ and $g = v(t_n) \in H^3$. The stability bound \eqref{stab10} follows similarly to Lemma \ref{lem:stab}: Note that
\begin{equation}
\begin{aligned}\label{eq:Lf2}
& \Vert \partial_x(\Phi^\tau_t(f) - \Phi^\tau_t(g)) \Vert_{L^2}^2  = \Vert \partial_x (f - g )\Vert_{L^2}^2\\
& + \frac{1}{3} \langle \partial_x  \mathrm{e}^{\partial_{x}^3 (t+\tau)}\Big [  \left( \mathrm{e}^{-\partial_{x}^3(t+\tau)} \partial_x^{-1}f\right)^2- \left( \mathrm{e}^{-\partial_{x}^3(t+\tau)} \partial_x^{-1}g\right)^2\Big], \partial_x (f-g)\rangle\\&
 - \frac{1}{3}\langle \partial_x \mathrm{e}^{\partial_{x}^3 t} \Big[ \left( \mathrm{e}^{-\partial_{x}^3t} \partial_x^{-1}f\right)^2- \left( \mathrm{e}^{-\partial_{x}^3t} \partial_x^{-1}g\right)^2\Big], \partial_x (f-g)\rangle\\
 & + \frac{1}{6^2} \Vert 
 \partial_x  \mathrm{e}^{\partial_{x}^3 (t+\tau)}\Big [  \left( \mathrm{e}^{-\partial_{x}^3(t+\tau)} \partial_x^{-1}f\right)^2- \left( \mathrm{e}^{-\partial_{x}^3(t+\tau)} \partial_x^{-1}g\right)^2\Big]\\&
 -
 \partial_x \mathrm{e}^{\partial_{x}^3 t} \Big[ \left( \mathrm{e}^{-\partial_{x}^3t} \partial_x^{-1}f\right)^2- \left( \mathrm{e}^{-\partial_{x}^3t} \partial_x^{-1}g\right)^2\Big]\Vert_{L^2}^2\\
 & =: \Vert \partial_x (f-g)\Vert_{L^2}^2 + \frac{1}{3}I_1 + \frac{1}{6^2} I_2.
\end{aligned}
\end{equation}
Similarly to the proof of Lemma \ref{lem:I1} we can rewrite $I_1$ as
   \begin{equation}\label{I110}
   \begin{aligned}
 I_1 
  & =  \langle  \tilde{f}-\tilde{g} ,  (\tilde{f}-\tilde{g})^2 - \mathrm{e}^{\partial_{x}^3\tau }  \left( \mathrm{e}^{-\partial_{x}^3\tau }  (\tilde f-\tilde g)\right)^2\rangle
\\
&+  \langle \tilde{g} , (\tilde{f}-\tilde{g})^2- \mathrm{e}^{\partial_{x}^3\tau }  \left( \mathrm{e}^{-\partial_{x}^3\tau }(\tilde{f}-\tilde{g})\right)^2\rangle
\\&
- 2 \langle   \tilde f-\tilde g,  \tilde{g} (\tilde{f}-\tilde{g}) - \mathrm{e}^{\partial_{x}^3\tau }\Big[\left( \mathrm{e}^{-\partial_{x}^3\tau } \tilde{g}\right)\left( \mathrm{e}^{-\partial_{x}^3\tau }(\tilde{f}-\tilde{g})\right)\Big] \rangle
\\&
- 2 \langle   \tilde f-\tilde g, ( \partial_x \tilde{g}) \left(\partial_x^{-1} (\tilde{f}-\tilde{g}) \right)- \mathrm{e}^{\partial_{x}^3\tau }\Big[\left( \mathrm{e}^{-\partial_{x}^3\tau }(\partial_x \tilde{g})\right)\left( \mathrm{e}^{-\partial_{x}^3\tau }\partial_x^{-1}(\tilde{f}-\tilde{g})\right)\Big] \rangle.
\end{aligned}
\end{equation}
As in Lemma \ref{lem:kb} we obtain by the key relation \eqref{eq:key} that
\begin{equation*}
I(u,v,w):= \vert \langle u  , v w - \mathrm{e}^{\partial_x^3 \tau} \Big[ \left(\mathrm{e}^{-\partial_x^3 \tau} v \right)  \left(\mathrm{e}^{-\partial_x^3 \tau}w \right)\Big] \rangle \vert
 \leq 3 \tau  \sum_{k_1,k_2} \vert (k_1+k_2) \hat{u}_{-(k_1+k_2)}\vert  \vert  k_1k_2\vert \vert \hat v_{k_1} \hat{w}_{k_2}\vert.
\end{equation*}
The Cauchy-Schwarz and Young inequality furthermore yield that
\begin{equation}\label{eb1}
\begin{aligned}
I(u,v,w) & \leq 3 \tau \sum_{k,l} |l | \vert \hat{u}_{-l}\vert \vert (l-k) k\vert \vert \hat{v}_k \hat{w}_{l-k}\vert \leq 3 \tau \big( \sum_l \vert l\vert^2 \vert \hat{u}_l\vert^2\big)^{1/2} \Vert v^{(1)} \ast w^{(1)}\Vert_{l^2}\\
& \leq c \tau \Vert \partial_x u \Vert_{L^2} \mathrm{min}\left(\Vert v^{(1)} \Vert_{l^1} \Vert w^{(1)} \Vert_{l^2},\Vert v^{(1)} \Vert_{l^2} \Vert w^{(1)} \Vert_{l^1} \right)\\
& \leq c \tau \Vert \partial_x u \Vert_{L^2} \mathrm{min}\left(\Vert \partial_x^2 v \Vert_{L^2} \Vert \partial_x w \Vert_{L^2},\Vert \partial_x^2 w \Vert_{L^2} \Vert \partial_x v\Vert_{L^2} \right).
\end{aligned}
\end{equation}
The above bound allows us to control the first and last two terms in \eqref{I110} as long as $f-g,g \in H^2$. Furthermore,
\begin{equation}\label{eb2}
\begin{aligned}
I(u,v,v) & \leq 3 \tau \sum_{k,l} |l | \vert \hat{u}_{-l}\vert \vert (l-k) k\vert \vert \hat{v}_k \hat{v}_{l-k}\vert \leq 3\tau  \big( \sum_k \vert k\vert^2 \vert \hat{v}_k\vert^2\big)^{1/2} \Vert u^{(1)} \ast v^{(1)}\Vert_{l^2}\\
& \leq 3 \tau \Vert \partial_x v\Vert_{L^2} \Vert u^{(1)}\Vert_{l^1} \Vert v^{(1)}\Vert_{l^2} \leq c \tau \Vert \partial_x v \Vert_{L^2}^2 \Vert \partial_x^2 u \Vert_{L^2},
\end{aligned}
\end{equation}
which allows us to control the second term in \eqref{I110} as long as $f-g\in H^1$ and $g \in H^2$. 

Using the bounds \eqref{eb1} and \eqref{eb2} in \eqref{I110} yields that
\begin{equation}\label{boundI12}
\vert I_1\vert \leq  \tau L \Vert \partial_x (f-g)\Vert_{L^2}^2,\quad L = L(\Vert \partial_x^2 (f-g)\Vert_{L^2},\Vert \partial_x^2 g \Vert_{L^2}). 
\end{equation}
Next we write $I_2 = I_2^a + I_2^b$ with
 \begin{equation*}
 \begin{aligned}
 I_2^a  =& \langle  
  \partial_x   \left( \mathrm{e}^{-\partial_{x}^3\tau } \partial_x^{-1}\tilde{f}\right)^2- \partial_x\left( \mathrm{e}^{-\partial_{x}^3\tau } \partial_x^{-1}\tilde{g}\right)^2- \partial_x \mathrm{e}^{-\partial_{x}^3 \tau} \Big[ \left(\partial_x^{-1}\tilde{f}\right)^2- \left(  \partial_x^{-1}\tilde{g}\right)^2\Big]
,\\&\quad
   \partial_x   \left( \mathrm{e}^{-\partial_{x}^3 \tau } \partial_x^{-1}\tilde{f}\right)^2-\partial_x \left( \mathrm{e}^{-\partial_{x}^3\tau} \partial_x^{-1}\tilde{g}\right)^2 \rangle\\
  I_2^b  =&  -  \langle \partial_x   \mathrm{e}^{\partial_{x}^3 \tau} \Big[ \left( \mathrm{e}^{-\partial_{x}^3 \tau} \partial_x^{-1} \tilde{f}\right)^2- \left( \mathrm{e}^{-\partial_{x}^3\tau} \partial_x^{-1}\tilde{g}\right)^2 \Big] - \partial_x  \left( \partial_x^{-1} \tilde{f} \right)^2- \partial_x\left( \partial_x^{-1}\tilde{g}\right)^2,\\&
\quad \partial_x \left(\partial_x^{-1}\tilde{f}\right)^2- \partial_x \left(  \partial_x^{-1}\tilde{g}\right)^2
 \rangle.
 \end{aligned}
 \end{equation*}
 Note that by the Cauchy-Schwarz and Young inequality we have with $F: = \partial_x \left(\partial_x^{-1}\tilde{f}\right)^2- \partial_x  \left(\partial_x^{-1}\tilde{g}\right)^2$ that
  \begin{equation}\label{I2bb}
 \begin{aligned}
  \vert I_2^b \vert & = \vert \sum_{k_1,k_2} \hat{F}_{-(k_1+k_2)} \frac{(k_1+k_2) }{k_1 k_2} \left(1 - \mathrm{e}^{-i \tau \big((k_1+k_2)^3 - k_1^3-k_2^3\big)} \right)\left( \hat{\tilde{f}}_{k_1} \hat{\tilde{f}}_{k_2} - \hat{\tilde{g}}_{k_1} \hat{\tilde{g}}_{k_2}\right)\vert \\
 &  = \vert \sum_{k_1,k_2}\hat{F}_{-(k_1+k_2)} \frac{(k_1+k_2) }{k_1 k_2} \left(1 - \mathrm{e}^{-i \tau 3 k_1k_2 (k_1+k_2)} \right)\left( \hat{\tilde{f}}_{k_1} \hat{\tilde{f}}_{k_2} - \hat{\tilde{g}}_{k_1} \hat{\tilde{g}}_{k_2}\right)\vert \\
 & \leq 3\tau   \sum_{k_1,k_2} \vert \hat{F}_{-(k_1+k_2)}(k_1+k_2)\vert  \vert (k_1+k_2)\vert \left( \hat{\tilde{f}}_{k_1} \hat{\tilde{f}}_{k_2} - \hat{\tilde{g}}_{k_1} \hat{\tilde{g}}_{k_2}\right)\vert\\
 & \leq 3 \tau \Big( \sum_l l^2 \vert \hat{F}_{l}\vert^2\Big)^{1/2} \Big( \Vert (\tilde{f}-\tilde{g})^{(1)} \ast \tilde f^{(0)}\Vert_{l^2} +  \Vert (\tilde{f}-\tilde{g})^{(0)} \ast \tilde f^{(1)}\Vert_{l^2} \\
 &+3 \tau \Big( \sum_l l^2 \vert \hat{F}_{l}\vert^2\Big)^{1/2} \Big( \Vert (\tilde{f}-\tilde{g})^{(1)} \ast \tilde g^{(0)}\Vert_{l^2} +  \Vert (\tilde{f}-\tilde{g})^{(0)} \ast \tilde g^{(1)}\Vert_{l^2} 
 \Big)\\
 & \leq c \tau \Vert \partial_x F\Vert_{L^2} \left(\Vert (\tilde{f}-\tilde{g})^{(1)}\Vert_{l^2} \big(\Vert \tilde{f}^{(0)}\Vert_{l^1}+ \Vert \tilde{g}^{(0)} \Vert_{l^1}\big)+\Vert (\tilde{f}-\tilde{g})^{(0)}\Vert_{l^1} \big(\Vert \tilde{f}^{(1)}\Vert_{l^2}+ \Vert \tilde{g}^{(1)} \Vert_{l^2}\big)\right) \\
 & \leq M \tau  \Vert \partial_x (\tilde{f}-\tilde{g})\Vert_{L^2}^2,
 \end{aligned}
 \end{equation}
where $M$ depends on $\Vert \partial_x f\Vert_{L^2}$ and $\Vert \partial_x g\Vert_{L^2}$. A similar bound holds for $I_2^a$ which implies that
\begin{equation}\label{boundI210}
\vert I_2 \vert \leq M \tau  \Vert \partial_x (f-g)\Vert_{L^2}^2,\quad M = M(\Vert \partial_x f\Vert_{L^2},\Vert \partial_x g\Vert_{L^2}).
\end{equation}
Plugging the bounds \eqref{boundI12} as well as \eqref{boundI210} into \eqref{eq:Lf2} yields the stability estimate \eqref{stab10}.

With the aid of the stability estimate \eqref{stab10} and the local error bound \eqref{loc10} the proof then follows the line of argumentation to the proof of Theorem \ref{thm:main0}.
\end{proof}
\begin{corollary}\label{cor:ucon}
Let the solution of the KdV equation \eqref{eq:kdv} satisfy $u(t) \in H^3$ for $t \leq T$. Then there exists a $\tau_0 > 0$ such that for all $\tau \leq \tau_0$ and $t_n \leq T$ the exponential-type integration scheme~\eqref{schemeU} is first-order convergent in $H^1$, i.e.,
\begin{equation*}
\Vert u(t_n) - u^n \Vert_{H^1} \leq c \tau,
\end{equation*}
where $c$ depends on $\mathrm{sup}_{0 \leq t \leq t_n} \Vert u(t)\Vert_{H^3}$ and $t_n$, but can be chosen independently of $\tau$.
\end{corollary}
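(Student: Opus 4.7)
The plan is to reduce the statement for $u$ directly to Theorem~\ref{thm:main} for the twisted variable $v$ via the isometry properties of the Airy group $\mathrm{e}^{-\partial_x^3 t}$ on Sobolev spaces. First I would recall that, for every $t \in \mathbb{R}$ and every $r \geq 0$, the operator $\mathrm{e}^{-\partial_x^3 t}$ is a linear isometry on $H^r(\mathbb{T})$, since its Fourier symbol is $\mathrm{e}^{i k^3 t}$ which has modulus one. In particular this gives $\|v(t)\|_{H^3} = \|u(t)\|_{H^3}$ so that the assumption $u(t) \in H^3$ for $t \leq T$ transfers to the regularity hypothesis $v(t) \in H^3$ needed to apply Theorem~\ref{thm:main}.

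Next I would compare the two sequences. By construction $u^n = \mathrm{e}^{-\partial_x^3 t_n} v^n$ and $u(t_n) = \mathrm{e}^{-\partial_x^3 t_n} v(t_n)$, hence
\begin{equation*}
u(t_n) - u^n = \mathrm{e}^{-\partial_x^3 t_n}\bigl(v(t_n) - v^n\bigr).
\end{equation*}
Applying the $H^1$ isometry property of $\mathrm{e}^{-\partial_x^3 t_n}$ yields $\|u(t_n)-u^n\|_{H^1} = \|v(t_n)-v^n\|_{H^1}$.

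The proof is then concluded by invoking Theorem~\ref{thm:main}, which provides $\tau_0 > 0$ such that for all $\tau \leq \tau_0$ and $t_n \leq T$,
\begin{equation*}
\|v(t_n) - v^n\|_{H^1} \leq c\,\tau,
\end{equation*}
with $c$ depending on $\sup_{0 \leq t \leq t_n}\|v(t)\|_{H^3} = \sup_{0 \leq t \leq t_n}\|u(t)\|_{H^3}$ and on $t_n$, but independent of $\tau$. Combining these two displays gives the claimed bound for $u$. There is no genuine obstacle here: the only point to check is the equivalence of the scheme \eqref{scheme} and the scheme \eqref{schemeU} under the twist, which is exactly the content of the derivation between \eqref{scheme} and \eqref{schemeU}, and the consistency of the initial data $v^0 = u^0 = u_0 \in H^3$ (so that the hypothesis of Theorem~\ref{thm:main} is met at $n=0$).
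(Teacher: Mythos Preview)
Your proof is correct and follows exactly the same approach as the paper: reduce to Theorem~\ref{thm:main} via the fact that $\mathrm{e}^{t\partial_x^3}$ is an isometry on $H^r$. The paper's proof is a single sentence; you have simply spelled out the details (transfer of the $H^3$ hypothesis, the identity $u(t_n)-u^n = \mathrm{e}^{-\partial_x^3 t_n}(v(t_n)-v^n)$, and the matching of constants) that the paper leaves implicit.
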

\begin{proof}
The assertion follows from Theorem \ref{thm:main} as $\mathrm{e}^{t\partial_x^3 }$ is a linear isometry in $H^1$ for all $t\in \mathbb{R}$.
\end{proof}
\section{Numerical experiments}\label{sec:num}
 In this section, we numerically underline the first- and second-order convergence rates of the exponential-type integration schemes~\eqref{schemeU} and \eqref{2scheme}, respectively, towards the exact solution of the KdV equation \eqref{eq:kdv}.  For the space discretization we use a Fourier pseudo spectral method, see \cite{YMQ88}, where we choose the largest Fourier mode $K = 2^{12}$. Details on the fully discrete scheme are given in Remark \ref{rem:AFPSE}.
\begin{remark}\label{rem:AFPSE} We employ the following fully discrete Fourier pseudo spectral version of~\eqref{schemeU}: Set $B^K = \{-K/2,\ldots,K/2-1\}$ and let $\mathcal{F}_K: B^K \rightarrow B^K$ denote the discrete Fourier transform and $\mathcal{F}_K^{-1}$ its inverse. Denote by $ u^{K,0}$ the discretized  initial value vector on the grid  $x_a = \frac{2\pi}{K} a$, $a \in B^K$ and set
\[
 \xi^{K,0} = \mathcal{F}_K \left(u^{K,0}\right) = \left[ (\mathcal{F}_K u^{K,0})_{-\frac{K}{2}},(\mathcal{F}_K u^{K,0})_{-\frac{K}{2}+1}, \ldots,  (\mathcal{F}_K u^{K,0})_{\frac{K}{2}-1}
\right].
\]
With this notation at hand a fully discrete Fourier pseudo spectral version of \eqref{schemeU} reads
\begin{equation*}
\begin{aligned}\label{FPSEI}
 & \xi^{K,n+1} =  \mathrm{e}^{- \tau\widehat \partial_{x,K}^3 } \xi^{K,n} + \frac{1}{6}\mathcal{F}_K \left(
\left[ \mathcal{F}_K^{-1}\left(\mathrm{e}^{- \tau\widehat \partial_{x,K}^3 }\widehat \partial_{x,K}^{-1}  \xi ^{K,n}\right)\right]^2\right)
-\frac{1}{6} \mathrm{e}^{- \tau \widehat \partial_{x,K}^3 } \mathcal{F}_K \left( \left[ \mathcal{F}_K^{-1}\left( \widehat \partial_{x,K}^{-1} \xi ^{K,n}\right)\right]^2\right)
\end{aligned}
\end{equation*}
with $ u^{K,n+1} = \mathcal{F}_K^{-1}  \left(\xi^{K,n+1}\right)$. Thereby, the multiplication of two vectors is taken point-wise, i.e.,
\[
 \left[x_{1},\ldots, x_{K}\right]\left[y_{1}, \ldots, y_{K}\right]= \left[x_{1} y_{1}, \ldots, x_{K}y_{K}\right]
\]
and the discrete differential operators acting in Fourier space are defined through
\begin{align*}
&\widehat \partial_{x,K} := \textstyle i \big[-K/2,  \ldots,  K/2-1\big],\qquad \widehat \partial_{x,K}^{-1} := \textstyle \frac{1}{i} \left[\frac{1}{-K/2},  \ldots, -1,0,1,\ldots \frac{1}{K/2-1}\right].
\end{align*}
\end{remark}
\begin{example} In the first numerical experiment we choose the initial value
\begin{equation}\label{In}
u(0,x) = 2\, \mathrm{sech}^2 \left(\textstyle \frac{x}{2}\right) \mathrm{sin}(x)\qquad \text{with} \quad  \mathrm{sech}(x) = \frac{1}{\mathrm{cosh}(x)}
\end{equation}
and integrate the  exponential-type integration schemes~\eqref{schemeU} and \eqref{2scheme}   up to $T=2$. As the exact solution is unknown we take as a reference solution the second-order scheme itself with a very small time-step size $\tau = 10^{-7}$.  The error between the numerical solutions and the reference solution at time $T=2$ as well as a graph of the initial value, the reference solution and the  first-order approximate solution   is given in Figure~\ref{fig:kdv}.


\begin{figure}[h!]
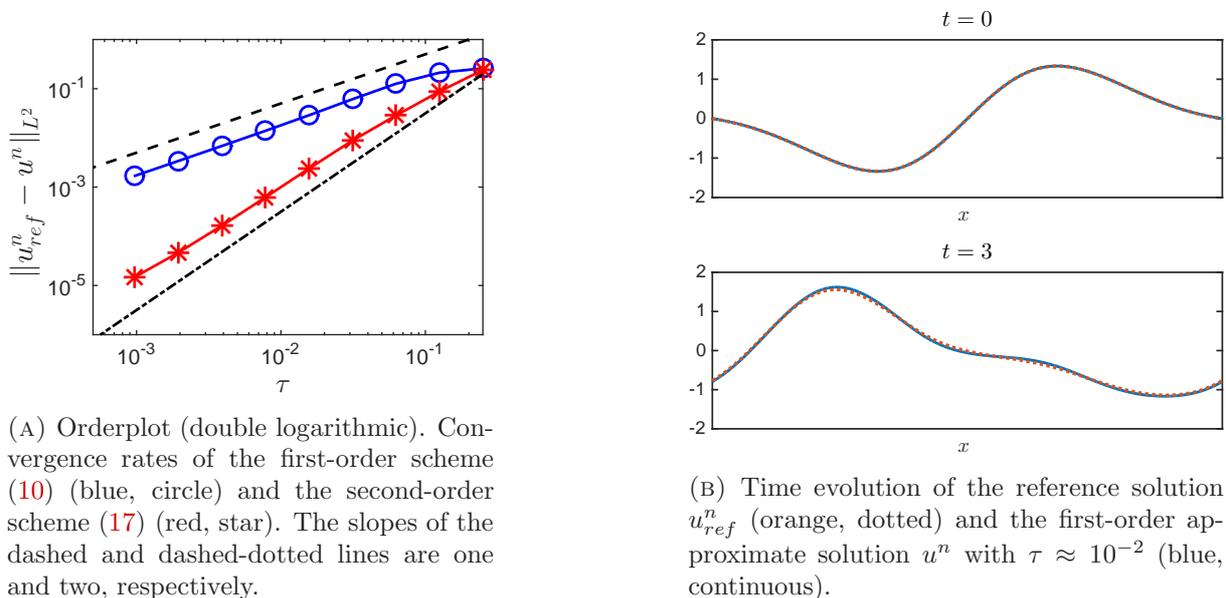

\centering
   \begin{subfigure}[b]{0.4\textwidth}
        \includegraphics[width=\textwidth]{kdv2}
        \caption{Orderplot (double logarithmic).  Convergence rates of the first-order scheme \eqref{schemeU} (blue, circle) and the second-order scheme \eqref{2scheme} (red, star). The slopes of the dashed and dashed-dotted lines are one and two, respectively.}
    \end{subfigure}\hfill
       \begin{subfigure}[b]{0.44\textwidth}
        \includegraphics[width=\textwidth]{solution}
        \caption{Time evolution of the reference solution $u_{ref}^n$ (orange, dotted) and the  first-order approximate solution  $u^n$ with $\tau \approx 10^{-2}$ (blue, continuous).}
    \end{subfigure}
\caption{(Initial value \eqref{In})  Numerical simulation of the  first- and second-order  exponential-type integration schemes \eqref{schemeU} and \eqref{2scheme}.}\label{fig:kdv}
\end{figure}


\end{example}

\begin{example}[Solitary waves] The KdV equation
\[
\partial_t \phi + \partial_x^3 \phi + \frac{1}{2} \partial_x (\phi^2) = 0, \qquad x \in \mathbb{R}
\]
allows solitary wave solutions of type
\begin{equation}\label{solW}
\phi(t,x) =  3 c \, \mathrm{sech}^2\left( \frac{\sqrt{c}}{2}(x-c t - a)\right) \qquad \text{with} \quad  a\in \mathbb{R}, \,c>0.
\end{equation}
In order to test the resolution of solitary waves under the schemes \eqref{schemeU} and \eqref{2scheme} we choose a ``large torus'' $\mathbb{T}_L = [-\frac{\pi}{L}, \frac{\pi}{L}]$ with $L=0.1$ such that boundary errors are negligible. Furthermore, we fix $c = 1$ and $a = 0$.  The $H^1$-error between the first- and second-order exponential-type integration schemes~\eqref{schemeU} and \eqref{2scheme}, respectively, and the exact solution \eqref{solW}  at time $T=2$ is illustrated in Figure \ref{fig:kdvSolOrd}. 
\begin{figure}[h!]
\centering
\includegraphics[width=0.4\linewidth]{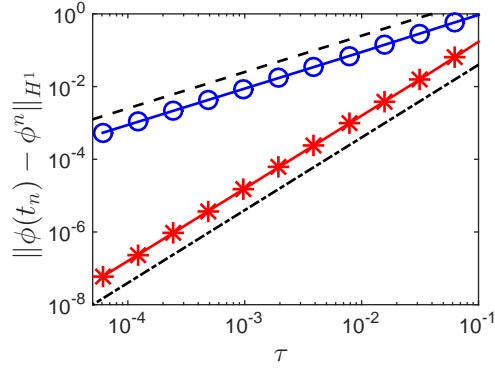}
\caption{(Solitary wave) Orderplot (double logarithmic).  Convergence rates of the first-order scheme \eqref{schemeU} (blue, circle) and the second-order scheme~\eqref{2scheme} (red, star) measured in a discrete $H^1$ norm. The slopes of the dashed and dashed-dotted lines are one and two, respectively.}\label{fig:kdvSolOrd}
\end{figure}

  A graph of the time evolution of the solitary wave solution  \eqref{solW} (with $c = 1.2$, $a = -5\pi$) and the corresponding first- and second-order approximate solutions~\eqref{vtS} and \eqref{2scheme}, respectively, for two different time step sizes is illustrated in Figure~\ref{fig:SolEv}.

\begin{figure}[h!]
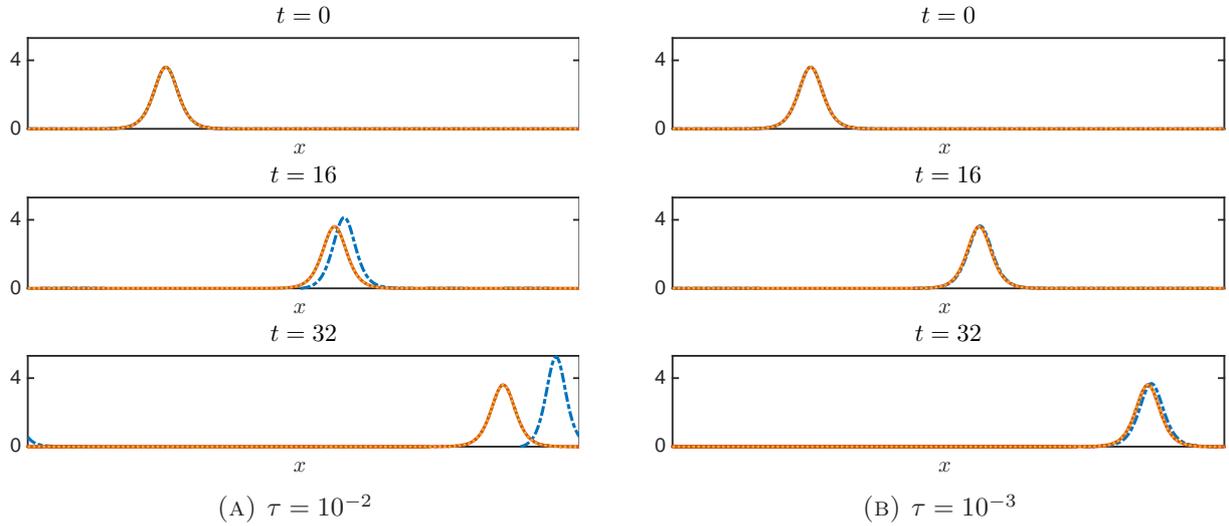

\centering
   \begin{subfigure}[b]{0.47\textwidth}
        \includegraphics[width=\textwidth]{2solwaveKdV1}
        \caption{$\tau = 10^{-2}$}
        \label{fig:mouse}
    \end{subfigure}\hfill
       \begin{subfigure}[b]{0.47\textwidth}
        \includegraphics[width=\textwidth]{2solwaveKdV2}
        \caption{$\tau = 10^{-3}$}
        \label{fig:mou}
    \end{subfigure}
\caption{ Time evolution of the solitary wave \eqref{solW} (yellow, dotted),  the first-order approximate solution~\eqref{vtS}  (blue, dashed-dotted) and second-order approximate solution \eqref{2scheme} (red, continous) for two different time-step sizes $\tau$.}\label{fig:SolEv}
\end{figure}

\end{example}

\section*{Acknowledgement}
K. Schratz gratefully acknowledges financial support by the Deutsche Forschungsgemeinschaft (DFG) through CRC 1173.

\end{document}